\newtheorem{theorem}{Theorem}[section]
\newtheorem{corollary}[theorem]{Corollary}
\newtheorem{question}[theorem]{Question}
\newtheorem{remark}[theorem]{Remark}
\newtheorem{proposition}[theorem]{Proposition}
\newtheorem{definition}[theorem]{Definition}
\newtheorem{example}[theorem]{Example}
\numberwithin{equation}{section}
\newcommand{\CC}{C_k}
\newcommand{\NN}{\mathbb{N}}
\newcommand{\GG}{\mathfrak{G}}
\newcommand{\UU}{\mathcal{U}}
\newcommand{\w}{\omega}
\newcommand{\TTT}{\mathcal{T}}
\newcommand{\KK}{\mathcal{K}}
\newcommand{\Nn}{\mathcal{N}}
\newcommand{\AAA}{\mathcal A}
\newcommand{\IR}{\mathbb{R}}
\newcommand{\e}{\varepsilon}
\renewcommand{\phi}{\varphi}
\newcommand{\U}{\mathcal U}
\title[Fundamental bounded resolutions and quasi-$(DF)$-spaces]{Fundamental bounded resolutions and quasi-$(DF)$-spaces}
\author{J. C. Ferrando}
\address{Centro de Investigaci\'{o}n Operativa, Edificio Torretamarit, Avda
de la Universidad, Elche, Spain}
\email{jc.ferrando@umh.es}
\author{S. Gabriyelyan}
\address{Department of Mathematics, Ben-Gurion University of the
Negev, Beer-Sheva, P.O. 653, Israel}
\email{saak@math.bgu.ac.il}
\author{J. K\c akol}
\address{ Faculty of Mathematics and Informatics. A. Mickiewicz University,
61-614 Pozna\'n, and Institute of Mathematics Czech Academy of Sciences, Prague}
\email{kakol@amu.edu.pl}
\thanks{The third  named author supported by GA\v{C}R Project 16-34860L and RVO: 67985840.}
\subjclass[2000]{Primary 46A03; Secondary 54A25, 54D50}
\keywords{quasi-$(DF)$-space, $(DF)$-space, class $\GG$, fundamental bounded resolution, function space}
\begin{document}

\begin{abstract}

We introduce a new class of locally convex spaces $E$, under the name quasi-$(DF)$-spaces,  containing strictly the class of $(DF)$-spaces. A locally convex space $E$ is called a quasi-$(DF)$-space if (i) $E$ admits a fundamental bounded resolution, i.e. an $\mathbb{N}^{\mathbb{N}}$-increasing  family of bounded sets in $E$ which swallows all bounded set in $E$, and (ii) $E$ belongs to the class $\GG$ (in sense of Cascales--Orihuela). The class of quasi-$(DF)$-spaces is closed under taking subspaces, countable direct sums and countable products. Every regular  $(LM)$-space (particularly, every metrizable locally convex space) and its strong dual are quasi-$(DF)$-spaces. We prove  that $C_{p}(X)$ has a fundamental bounded resolution iff $C_{p}(X)$ is a quasi-$(DF)$-space  iff the strong dual of $C_{p}(X)$ is a quasi-$(DF)$-space iff $X$ is countable.  If $X$ is a metrizable space, then $\CC(X)$ is a  quasi-$(DF)$-space  iff $X$ is a Polish $\sigma$-compact space. We provide  numerous  concrete examples which in particular  clarify differences between $(DF)$-spaces and quasi-$(DF)$-spaces.
\end{abstract}

\maketitle


\section{Introduction}


An important class of locally convex spaces (lcs, for short), the class of $(DF)$-space,  was introduced by Grothendieck in \cite{Grot-54}, we refer also to  monographs  \cite{Jar} or \cite{bonet} for more details.
\begin{definition} \label{def:DF-space} {\em
A locally convex space $E$ is called a \emph{$(DF)$-space} if
\begin{enumerate}
\item[(1)] $E$ has a fundamental  sequence of bounded sets, and
\item[(2)] $E$ is $\aleph_{0}$-quasibarrelled. 
\end{enumerate} }
\end{definition}
All countable inductive limits of normed spaces (hence all normed spaces)  are $(DF)$-spaces. Less evident facts are  that the strong dual of a metrizable lcs is a $(DF)$-space and the strong dual of a $(DF)$-space is a metrizable and complete lcs, see \cite[Theorem 8.3.9,  Proposition 8.3.7]{bonet}.

The concept of $(DF)$-spaces has been generalized by Ruess \cite{ru1,ru2} (under the name $(gDF)$-spaces) and has also intensively studied by Noureddine \cite{Nou1,Nou2} who called them $D_{b}$-spaces. Their papers provided also some information about spaces which in \cite{Jar} were called $(df)$-spaces. Next Adasch and Ernst \cite{adasch1,adasch2}  provided another line of research around  $(DF)$-spaces in the setting of general topological vector spaces. Note however that  all known generalizations of $(DF)$-spaces $E$ kept up the  condition (1) on $E$ to have a fundamental sequence of bounded sets while some variations of the  weak barrelledness condition (2) on $E$ have been assumed.

In the present paper we propose another very natural generalization of the concept of $(DF)$-spaces. We replace the quite strong and demanding condition on $E$ to have a fundamental sequence of bounded sets by a weaker one called a \emph{fundamental bounded resolution} and assume some natural extra property on the weak*-dual of $E$ (which holds for $(DF)$-spaces), see Definition \ref{def:quasi-DF}.

Let $\{ B_{n}\}_{n\in\NN}$ be a fundamental sequence of bounded subsets of a lcs $E$. For each $\alpha=(n_{k})\in\mathbb{N}^{\mathbb{N}}$ set $B_{\alpha}:=B_{n_{1}}$. Then  the family $\mathcal{B}=\{B_{\alpha}:\alpha\in\mathbb{N}^{\mathbb{N}}\}$  satisfies the conditions:
\begin{enumerate}
\item [(i)]  every set $B_{\alpha}$ is bounded in $E$;
\item[(ii)]  $\mathcal{B}$ is a resolution in $E$ (i.e., $\mathcal{B}$ covers $E$ and $B_{\alpha}\subseteq B_{\beta}$ if $\alpha\leq\beta$, $\alpha,\beta\in\mathbb{N}^{\mathbb{N}}$);
\item[(iii)]  every bounded subset of  $E$ is contained in  some $B_{\alpha}$.
\end{enumerate}
If a lcs $E$ is covered by  a family $\{B_{\alpha}:\alpha\in\mathbb{N}^{\mathbb{N}}\}$ of bounded sets satisfying conditions (i) and (ii) [and (iii)] we shall say that $E$ has a \emph{[fundamental] bounded resolution}, see also \cite{KM} for more details. Let us recall that every metrizable lcs $E$ has a fundamental bounded resolution (see, for example, Corollary \ref{c:metrizable-fbr} below), while $E$ has a fundamental sequence of bounded sets if and only if $E$ is normable.

The definition of a quasi-$(DF)$-space involves also the following concept due to Cascales and Orihuela, see \cite{CasOr}.
\begin{definition}[Cascales--Orihuela] {\em
A lcs $E$ belongs to {\em the class $\mathfrak{G}$} if there is a resolution $\{A_{\alpha }:\alpha \in \mathbb{N}^{\mathbb{N}}\}$ in the weak*-dual $(E^{\prime },\sigma (E^{\prime },E))$ of $E $ such that each sequence in any $A_{\alpha }$ is equicontinuous.}
\end{definition}
Particularly, every set $A_{\alpha }$ is relatively $\sigma (E^{\prime},E)$-countably compact. The class $\mathfrak{G}$ is indeed large and contains `almost all' important locally convex spaces (including $(DF)$-spaces  and even dual metric spaces). Furthermore the class $\mathfrak{G}$ is stable under taking
subspaces, completions, Hausdorff quotients and countable direct sums and products, see \cite{CasOr} or \cite{kak}.

In \cite{CKS} the authors introduced and studied a subclass of lcs in the class $\GG$:
\begin{definition}[Cascales--K\c akol--Saxon] {\em
A lcs $E$ is said to have a {\em $\GG$-base} if $E$ admits a base $\{U_{\alpha}:\alpha\in\mathbb{N}^{\mathbb{N}}\}$ of neighbourhoods of zero such that $U_{\alpha}\subseteq U_{\beta}$ for all $\beta\leq\alpha$.}
\end{definition}
They proved in \cite[Lemma 2]{CKS} that a quasibarrelled lcs $E$  has a $\GG$-base if and only if $E$ is in class $\GG$.  However, under $\aleph_{1}<\mathfrak{b}$ there is a $(DF)$-space (hence belongs to the class $\GG$) which does not admit a $\GG$-base, see \cite{kak} (or Example \ref{exa:DF-no-G-base} below). Note also that every  $(LM)$-space $E$ has a $\GG$-base, so $E$ is in the class $\GG$.

Now we define quasi-$(DF)$-spaces.

\begin{definition}\label{def:quasi-DF} {\em
A lcs $E$ is called a {\em quasi-$(DF)$-space} if
\begin{enumerate}
\item [(i)] $E$ admits a fundamental bounded resolution;
\item [(ii)] $E$ belongs to the class $\GG$.
\end{enumerate} }
\end{definition}
Note that (i) and (ii) are independent in the sense that there exist lcs $E$ in the class $\GG$ without a fundamental bounded resolution, and there exist lcs  $E$ not being in the class $\GG$ but having a fundamental bounded resolution, see Examples \ref{exa:def-DF-space-1} and \ref{exa:def-DF-space-2} below.


The class of quasi-$(DF)$-spaces is stable under taking subspaces, countable direct sums and countable products (see Theorem \ref{t:properties-quasi-DF-spaces}), while infinite products of $(DF)$-spaces are not of that type.

The organization of the paper goes as follows. Section \ref{sec:general} provides  a dual characterization of lcs  with fundamental bounded resolutions, see Theorem \ref{t:G-base-strong}. This result nicely applies to show that every  regular $(LM)$-space (for the definition see below) has a  fundamental bounded resolution.  
We  provide many concrete examples in order to highlight the differences among some properties or between the concepts which have been introduced, for example concrete examples of  lcs having a bounded resolution but not admitting a  fundamental bounded resolution will be examined.

Denote by $C_{p}(X)$ and $\CC(X)$ the space $C(X)$ of continuous real-valued functions on a Tychonoff space $X$ endowed with the pointwise topology and the compact-open topology, respectively. The main result of Section \ref{sec:function} states that $C_p(X)$ has  a fundamental bounded resolution if and only if $X$ is countable if and only if the strong dual of $C_p(X)$ has a fundamental bounded resolution, see Theorem \ref{t:FBR-Cp} and Proposition \ref{p:strong-dual-Cp-fund-res}. This result shows that, although the class $\GG$ of lcs is relatively large, the existence of a fundamental bounded resolution is  rather  a strong restricted condition  for non-metrizable lcs.

Last section gathers  some properties of quasi-$(DF)$-spaces (see Theorem \ref{t:properties-quasi-DF-spaces}) and extends a result of Corson in \cite{Mich}. The previous sections  apply  to conclude  that $C_{p}(X)$ is a quasi-$(DF)$-space if and only if $C_{p}(X)$ has a fundamental bounded resolution if and only if $X$ is countable if and only if the strong dual of $C_{p}(X)$ is a quasi-$(DF)$-space. The latter result   combined with   Theorem \ref{t:FBR-Cp} shows that, for spaces $C_{p}(X)$, both conditions (i) and (ii) from Definition \ref{def:quasi-DF} fail for uncountable $X$.  Recall here  that $C_{p}(X)$ is a $(DF)$-space only if $X$ is finite, see \cite{schmets}. On the other hand, if $X$ is a metrizable space, then    $\CC(X)$ is a quasi-$(DF)$-space if and only if $X$ is a Polish $\sigma$-compact space, see Proposition \ref{p:Ck-quasi-DF-metr}. This shows also an essential difference between quasi-$(DF)$-spaces and $(DF)$-spaces $C_{k}(X)$.

Recall that the strong dual $E'_{\beta}:=(E',\beta(E',E))$ of a metrizable lcs $E$ is a $(DF)$-space. However, this result fails even for  strict $(LF)$-spaces $E$ since  $E'_{\beta}$  need not be $\aleph_{0}$-quasibarrelled, see \cite[Proposition~1]{bonet2}. 
Since for a non-metrizable  strict $(LF)$-space   $E$  the space $E'_{\beta}$ does not have a fundamental sequence of bounded sets,  the strong dual $E'_{\beta}$ is not a $(DF)$-space but it is  a quasi-$(DF)$-space, see Theorem \ref{t:properties-quasi-DF-spaces}.


\section{[Fundamental] bounded resolutions and  $\GG$-bases; general case} \label{sec:general}


In this section we obtain dual characterizations of the existence of a [fundamental] bounded resolutions in a lcs $E$. 

Let $I$ be a partially ordered set. A family $\AAA=\{ A_i\}_{i\in I}$ of subsets of a set $\Omega$ is called {\em $I$-increasing} ({\em $I$-decreasing}) if $A_i \subseteq A_j$ ($A_i \supseteq A_j$, respectively) for every $i\leq j$ in $I$. We say that the family $\AAA$ {\em swallows } a family $\mathcal{B}$ of subsets of $\Omega$ if for every $B\in \mathcal{B}$ there is an $i\in I$ such that $B\subseteq A_i$. An $\NN^\NN$-increasing family of subsets of $\Omega$ is called a {\em resolution} in $\Omega$ if it covers $\Omega$.
A resolution in a topological space $X$ is called {\em compact} (respectively, {\em fundamental compact}) if all its elements are compact subsets of $X$ (and it swallows compact subsets of $X$, respectively).

We start with the following general observation.

\begin{proposition} \label{p:LCS-bounded-resol}
If a lcs  $E$  admits  an $I$-decreasing base at zero, then  $E$ has an $\NN^I$-increasing bounded resolution swallowing bounded subsets of $E$. Consequently $E'_\beta$ has an $\NN^I$-decreasing base at zero.
\end{proposition}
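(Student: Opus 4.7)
The plan is to construct the bounded resolution directly from the hypothesised base. Let $\{U_i\}_{i\in I}$ be the $I$-decreasing base of zero-neighbourhoods in $E$. After replacing each $U_i$ by its balanced hull $\bigcup_{|\lambda|\le 1}\lambda U_i$, which is still a zero-neighbourhood, is balanced, and preserves the $I$-decreasing property, I may assume every $U_i$ is balanced. For each $\varphi\in \NN^I$ I would then define
$$B_\varphi := \bigcap_{i\in I}\varphi(i)\,U_i.$$

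Next I would verify the four required properties. \emph{Boundedness} of $B_\varphi$: given any zero-neighbourhood $V$, pick $i\in I$ with $U_i\subseteq V$, so that $B_\varphi\subseteq \varphi(i)U_i\subseteq \varphi(i)V$. \emph{Monotonicity}: for $\varphi\le \psi$ in the pointwise order on $\NN^I$, balancedness of each $U_i$ gives $\varphi(i)U_i\subseteq \psi(i)U_i$, hence $B_\varphi\subseteq B_\psi$. \emph{Covering}: since each $U_i$ absorbs points, for every $x\in E$ one can select (by the axiom of choice) $\varphi(i)\in \NN$ with $x\in \varphi(i)U_i$ for each $i\in I$, so $x\in B_\varphi$. \emph{Swallowing}: any bounded $B\subseteq E$ is absorbed by every $U_i$, and a further application of the axiom of choice yields $\varphi\in \NN^I$ with $B\subseteq \varphi(i)U_i$ for all $i$, i.e.\ $B\subseteq B_\varphi$.

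For the consequence about $E'_\beta$, I would invoke the standard fact that the polars of the members of any fundamental system of bounded sets of $E$ form a base of zero-neighbourhoods in $E'_\beta$. By the swallowing property, $\{B_\varphi\}_{\varphi\in \NN^I}$ is such a fundamental system, so $\{B_\varphi^\circ\}_{\varphi\in \NN^I}$ is a base at zero in $E'_\beta$; and since polarity reverses inclusions, $\varphi\le \psi$ implies $B_\varphi^\circ\supseteq B_\psi^\circ$, giving the desired $\NN^I$-decreasing base.

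The only mild obstacle is bookkeeping: one must check that passing to balanced hulls does not disturb the $I$-decreasing structure, and that the ordering conventions survive the passage to polars. Both are routine, and the rest is a straightforward manipulation.
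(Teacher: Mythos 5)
Your proposal is correct and follows essentially the same route as the paper: the authors also set $B_\alpha:=\bigcap_{i\in I}\alpha(i)U_i$, verify boundedness and the covering/swallowing properties by choosing $\alpha(i)$ with $B\subseteq\alpha(i)U_i$, and pass to polars for the statement about $E'_\beta$. The only difference is that you spell out the details the paper dismisses as ``clearly'' (in particular the reduction to balanced $U_i$ needed for $\NN^I$-monotonicity), which is a harmless and indeed welcome elaboration.
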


\begin{proof}
Let $\UU:=\{ U_i: i\in I\}$ be an $I$-decreasing base at zero in $E$. For every $\alpha\in  \NN^I$, set
\[
B_\alpha := \bigcap_{i\in I} \alpha(i)U_i,
\]
and set $\mathcal{B}:=\{ B_\alpha: \alpha\in \NN^I\}$. Clearly, $\mathcal{B}$ is $\NN^I$-increasing bounded resolution in $E$. To show that $\mathcal{B}$ swallows the bounded sets of $E$, fix a bounded subset $B$ of $E$. For every $i\in I$, choose a natural number $\alpha(i)$ such that $B\subseteq \alpha(i)U_i$ and set $\alpha:=\big(\alpha(i)\big)\in \NN^I$. Clearly, $B\subseteq B_\alpha$.
\end{proof}

Since a metrizable lcs has an $\NN$-decreasing base, Proposition \ref{p:LCS-bounded-resol} implies
\begin{corollary} \label{c:metrizable-fbr}
If $E$ is a metrizable lcs, then $E$ has a fundamental bounded resolution and hence the space $E'_\beta$ has a $\GG$-base.
\end{corollary}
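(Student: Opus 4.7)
The plan is to obtain this as an immediate consequence of Proposition \ref{p:LCS-bounded-resol} applied with $I=\NN$. First I would recall the textbook fact that any metrizable locally convex space admits a countable base $\{V_n\}_{n\in\NN}$ of neighbourhoods of zero, and by passing to $U_n := \bigcap_{k\le n} V_k$ if necessary, one may assume $U_n \supseteq U_{n+1}$ for every $n$. This is precisely an $\NN$-decreasing base at zero in the sense of Proposition \ref{p:LCS-bounded-resol}.

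Applying that proposition with $I=\NN$, the space $E$ acquires an $\NN^\NN$-increasing family of bounded sets that covers $E$ and swallows the bounded subsets of $E$; this is exactly a fundamental bounded resolution in the sense of the introduction, which yields the first assertion. Simultaneously, the proposition produces an $\NN^\NN$-decreasing base at zero in $E'_\beta$. Unwinding definitions, the requirement $U_\alpha \subseteq U_\beta$ for $\beta \leq \alpha$ in the definition of a $\GG$-base is exactly the $\NN^\NN$-decreasing condition just produced, so this base is a $\GG$-base of $E'_\beta$.

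There is essentially no obstacle here: the corollary is a formal specialisation of Proposition \ref{p:LCS-bounded-resol}, and the only genuine input is the elementary observation that a metrizable lcs admits a countable decreasing neighbourhood base at zero.
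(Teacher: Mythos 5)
Your proposal is correct and is exactly the paper's argument: the corollary is stated there as an immediate consequence of Proposition \ref{p:LCS-bounded-resol} with $I=\NN$, using the standard fact that a metrizable lcs has a countable decreasing base of neighbourhoods of zero. Nothing is missing.
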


\begin{example} \label{exa:BR-R-kappa} {\em
For every uncountable cardinal $\kappa$, the space $\IR^\kappa$ does not have bounded resolution. Indeed, assuming the converse we obtain that the complete space $\IR^\kappa$ has a fundamental bounded resolution by Valdivia's theorem \cite[Theorem~3.5]{kak}. But then  the closures of the sets of this latter family compose a fundamental compact resolution. Now Tkachuk's theorem \cite[Theorem~9.14]{kak} implies that $\kappa$ is countable, a contradiction.}
\end{example}

Next proposition  gathers  the most important stability properties of spaces with a fundamental bounded resolution.
\begin{proposition} \label{p:Stability-FBR}
The class of locally  convex spaces with a fundamental bounded resolution is closed under taking (i) subspaces, (ii) countable direct sums, and (iii) countable products.
\end{proposition}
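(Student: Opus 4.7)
The plan is to verify each of the three stability properties in turn, using in (ii) and (iii) a standard order-preserving bijection between $\NN^\NN$ and $(\NN^\NN)^\NN$ (or $\NN \times (\NN^\NN)^\NN$), so that a single element of $\NN^\NN$ encodes infinitely many resolution parameters at once.

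For (i), if $F$ is a subspace of $E$ and $\{B_\alpha : \alpha \in \NN^\NN\}$ is a fundamental bounded resolution of $E$, I would take $\{B_\alpha \cap F : \alpha \in \NN^\NN\}$. This is clearly an $\NN^\NN$-increasing cover of $F$ by sets that are bounded in $F$, and since every bounded subset of $F$ is bounded in $E$, it is swallowed by some $B_\alpha$ and hence by $B_\alpha \cap F$.

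For (iii), writing $E = \prod_{n \in \NN} E_n$ with each $E_n$ carrying a fundamental bounded resolution $\{B_\alpha^n\}_{\alpha \in \NN^\NN}$, I would fix a bijection $\phi : \NN \to \NN \times \NN$ and, for each $\alpha \in \NN^\NN$ and $k \in \NN$, define $\alpha^{(k)} \in \NN^\NN$ by $\alpha^{(k)}(j) := \alpha(\phi^{-1}(k,j))$. Then $\alpha \mapsto (\alpha^{(k)})_k$ is an order isomorphism between $(\NN^\NN, \leq)$ and $((\NN^\NN)^\NN, \leq)$ under the pointwise orders. Setting
$$B_\alpha := \prod_{k \in \NN} B_{\alpha^{(k)}}^k,$$
I expect the three defining properties to fall out immediately: each $B_\alpha$ is bounded in $E$ because all of its coordinate projections are bounded; the family is $\NN^\NN$-increasing by construction; and every bounded $C \subseteq E$ satisfies $C \subseteq \prod_k \pi_k(C)$ with $\pi_k(C) \subseteq B_{\alpha^{(k)}}^k$ for a suitable choice of $(\alpha^{(k)})_k$, whence $C \subseteq B_\alpha$.

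Case (ii) is the main obstacle, since a set in $E = \bigoplus_{n \in \NN} E_n$ is bounded if and only if it is contained in some finite partial sum $\bigoplus_{k \leq N} E_k$ and has bounded coordinate projections, so one must simultaneously encode the \emph{width} $N$ and the coordinate parameters. I would use an order isomorphism $\NN^\NN \cong \NN \times (\NN^\NN)^\NN$ of the form $\alpha \leftrightarrow (\alpha(1), (\alpha^{(k)})_{k \geq 1})$, reading off $\alpha^{(k)}$ from the entries of $\alpha$ indexed by $\{2,3,\dots\}$ through a bijection with $\NN \times \NN$, and then define
$$B_\alpha := \bigoplus_{k=1}^{\alpha(1)} B_{\alpha^{(k)}}^k,$$
regarded as a subset of $E$ in the obvious way. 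Monotonicity follows from the separate monotonicity of the width coordinate and of each map $\alpha \mapsto \alpha^{(k)}$; coverage and the swallowing property follow from the characterisation of bounded sets in the direct sum recalled above. The delicate point is purely one of bookkeeping: the encoding must guarantee that enlarging $\alpha$ enlarges \emph{both} the width and every coordinate parameter at once, which the product encoding $\NN \times (\NN^\NN)^\NN$ handles automatically.
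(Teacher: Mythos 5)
Your proposal is correct and follows essentially the same route as the paper: the paper also handles subspaces by intersection, encodes $\alpha\in\NN^\NN$ as a sequence $(\alpha_i)_i$ in $\NN^\NN$ via a partition of $\NN$ into infinitely many infinite sets (equivalent to your bijection $\NN\to\NN\times\NN$), sets $B_\alpha:=\prod_i B^i_{\alpha_i}$ for products, and for direct sums uses $\alpha(1)$ as the width together with the shifted sequence $\alpha^\ast=(\alpha(k+1))_k$ to index the finitely many factors. No gaps.
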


\begin{proof}
(i) is clear. To prove (ii) and (iii) we shall use the following encoding operation of elements of $\mathbb{N}^{\mathbb{N}}$. We encode each $\alpha \in \mathbb{N}^{\mathbb{N}}$ into a sequence $\{\alpha _{i}\}_{i\in \mathbb{N}}$ of elements of $\mathbb{N}^{\mathbb{N}}$ as follows. Consider an arbitrary decomposition of $\mathbb{N}$ onto a disjoint family  $\{N_{i}\}_{i\in \mathbb{N}}$ of infinite sets, where $N_{i}=\{n_{k,i}\}_{k\in \mathbb{N}}$ for $i\in \mathbb{N}$. Now for  $\alpha =\left( \alpha (n)\right) _{n\in \mathbb{N}}$ and $i\in \mathbb{N}$, we set $\alpha_{i}=\left( \alpha_{i}(k)\right) _{k\in \mathbb{N}}$, where $\alpha _{i}(k):=\alpha (n_{k,i})$ for every $k\in \mathbb{N}$. Conversely, for every sequence  $\{\alpha_{i}\}_{i\in \mathbb{N}}$ of  elements of $\mathbb{N}^{\mathbb{N}}$, we define $\alpha =\left( \alpha (n)\right)_{n\in \mathbb{N}}$ setting $\alpha(n):=\alpha _{i}(k)$ if $n=n_{k,i}$.

(ii) Let $E=\prod_{i\in\NN} E_i$, where every $E_i$ has a fundamental bounded resolution $\{ B^i_\alpha:\alpha\in\NN^\NN\}$. For every $\alpha\in\NN$, we define
$B_\alpha :=\prod_{i\in\NN} B^i_{ \alpha_i}$
and  set $\mathcal{B}:=\{ B_\alpha:\alpha\in\NN^\NN\}$.  Since a subset of $E$ is bounded if and only if  its projection onto $E_i$ is bounded in $E_i$ for every $i\in\NN$, it is easy to see that $\mathcal{B}$ is a fundamental bounded resolution in $E$.

(iii) Let $E=\bigoplus_{i\in\NN} E_i$, where every $E_i$ has a fundamental bounded resolution $\{ B^i_\alpha:\alpha\in\NN^\NN\}$. For every $\alpha\in\NN^\NN$, set $\alpha^\ast := \big(\alpha(k+1)\big)_{k\in\NN}$ and
$B_\alpha := \prod_{i=1}^{\alpha(1)} B^i_{\alpha^\ast_i}.$
Since every bounded subset of $E$ is contained and bounded in $\bigoplus_{i=1}^m E_i$ for some $m\in\NN$ (see  Proposition 24 of Chapter 5 of \cite{RobRob}), we obtain that the family $\{ B_\alpha:\alpha\in\NN^\NN\}$ is  a fundamental bounded resolution in $E$.
\end{proof}

\begin{example} {\em
Let $E$ be an infinite-dimensional Banach space and let $B$ be the closed unit ball of $E$. Then its dimension is uncountable. Choose a Hamel basis $H = \{ x^\ast_{i} : i \in I\}$ from its topological dual $E'$. Then the map $x\to \big(x^\ast_{i}(x)\big)$ from $E_w$ into $\IR^H$ is an embedding with dense image. Since $E$ is a Banach space, the sequence $\{ nB\}_{n\in\NN}$ is a fundamental bounded sequence in $E_w$. However, since $H$ is uncountable, the completion $\IR^H$ of $E_w$ does not have even a bounded resolution by Example \ref{exa:BR-R-kappa}. Hence, the completion of a lcs with a fundamental bounded resolution in general does not have a bounded resolution. }
\end{example}


For a subset $A$ of a lcs $E$, we denote by $A^\circ$ the polar of $A$ in $E'$. Below we give a dual characterization of lcs with a fundamental bounded resolution.
Recall that a lcs $E$ is a \emph{quasi-$(LB)$-space } \cite{Va1} if $E$ admits a resolution consisting of Banach discs of $E$.
\begin{theorem} \label{t:G-base-strong}
For a lcs $E$ the following assertions are equivalent:
\begin{enumerate}
\item[{\rm (i)}] $E$ has a fundamental bounded resolution;
\item[{\rm (ii)}] the strong dual $E'_\beta$ of $E$ has a $\mathfrak{G}$-base;
\item[{\rm (iii)}] the weak* bidual $\left( E^{\prime \prime }, \sigma \left( E'',E^{\prime }\right) \right)$ is a quasi-$\left( LB\right) $-space.
\end{enumerate}
If in addition the space $E$ is locally complete, then (i)-(iii) are equivalent to
\begin{enumerate}
\item[{\rm (iv)}] $E$ has a bounded resolution;
\item[{\rm (v)}] $E$ is a quasi-$(LB)$-space.
\end{enumerate}
\end{theorem}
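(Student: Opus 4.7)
The plan is to establish the core equivalence (i) $\Leftrightarrow$ (ii) $\Leftrightarrow$ (iii) in full generality via polar duality and the Alaoglu--Bourbaki theorem, and then to handle the additional equivalences with (iv) and (v) under local completeness by passing to closed absolutely convex hulls, which under local completeness become Banach discs.

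For (i) $\Leftrightarrow$ (ii) I would use polar duality in the dual pair $(E,E')$. Given a fundamental bounded resolution $\{B_\alpha\}_{\alpha\in\NN^\NN}$ of $E$, the polars $U_\alpha := B_\alpha^\circ \subseteq E'$ form an $\NN^\NN$-decreasing family of $\beta(E',E)$-neighborhoods of zero, and the swallowing property translates exactly into the statement that $\{U_\alpha\}$ is a neighborhood base of zero in $E'_\beta$, i.e., a $\GG$-base. Conversely, given a $\GG$-base $\{U_\alpha\}$ of $E'_\beta$, setting $B_\alpha := U_\alpha^\circ$ (polar in $E$) yields, by the bipolar theorem, a fundamental bounded resolution of $E$.

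For (ii) $\Rightarrow$ (iii) I would apply the Alaoglu--Bourbaki theorem to the lcs $E'_\beta$, whose continuous dual is $E''$: each polar $D_\alpha := U_\alpha^\circ \subseteq E''$ is $\sigma(E'',E')$-compact absolutely convex, therefore a Banach disc, and the family $\{D_\alpha\}$ is an $\NN^\NN$-increasing resolution of $E''$ (covering follows from $\{U_\alpha\}$ being a base at zero in $E'_\beta$). Conversely, for (iii) $\Rightarrow$ (i), given a quasi-$(LB)$-resolution $\{D_\alpha\}$ of $(E'',\sigma(E'',E'))$ by Banach discs, I would define $B_\alpha := D_\alpha \cap E$ via the canonical embedding $E \hookrightarrow E''$; each $B_\alpha$ is bounded in $E$ (since $\sigma(E'',E')$-boundedness restricts to $\sigma(E,E')$-boundedness, which coincides with ordinary boundedness in $E$), and $\{B_\alpha\}$ is an $\NN^\NN$-increasing resolution. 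The delicate point is fundamentality: for a bounded $B \subseteq E$, the bipolar $\tilde{B} := B^{\circ\circ}$ computed in $E''$ is $\sigma(E'',E')$-compact absolutely convex (Alaoglu--Bourbaki again), hence a Banach disc of $E''$, and the Valdivia-type absorption principle for quasi-$(LB)$-spaces --- that every Banach disc is swallowed by some member of a Banach-disc resolution --- yields $\tilde{B} \subseteq D_\alpha$ for some $\alpha$, so that $B \subseteq B_\alpha$. This absorption step is the main obstacle and is the key nontrivial ingredient.

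Finally, suppose $E$ is locally complete. Then (i) $\Rightarrow$ (iv) is immediate; for (iv) $\Rightarrow$ (v), replacing each member of a bounded resolution by its closed absolutely convex hull produces a Banach disc by local completeness, giving a quasi-$(LB)$-resolution of $E$; and for (v) $\Rightarrow$ (i), the closed absolutely convex hull $\tilde{B}$ of a bounded $B \subseteq E$ is itself a Banach disc in $E$, so the same absorption principle applied directly inside $E$ locates $\tilde{B}$, and hence $B$, inside some $D_\alpha$ of the Banach-disc resolution, completing the cycle.
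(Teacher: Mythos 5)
Your proposal is correct and follows essentially the same route as the paper: polar duality for (i)$\Leftrightarrow$(ii), Alaoglu--Bourbaki bipolars for the implications involving the weak* bidual, and Valdivia's theorem on quasi-$(LB)$-spaces for the absorption step, with closed absolutely convex hulls becoming Banach discs under local completeness for (iv) and (v). The only point worth making explicit is that the ``absorption principle'' you invoke is precisely Valdivia's theorem that a quasi-$(LB)$-space admits \emph{another} Banach-disc resolution swallowing all Banach discs --- the given resolution need not have this property, so one must pass to the new one before intersecting with $E$.
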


\begin{proof}
(i)$\Rightarrow$(ii) Let $\left\{ A_{\alpha }:\alpha \in \mathbb{N}^{\mathbb{N}}\right\} $ be  a fundamental  bounded resolution. Then, as easily seen,  the family $\left\{ A_{\alpha }^\circ:\alpha \in \mathbb{N}^{\mathbb{N}}\right\} $ is a $\mathfrak{G}$-base of neighborhoods of the strong topology on $E^{\prime }$.

(ii)$\Rightarrow$(i) If $\left\{ V_{\alpha }:\alpha \in \mathbb{N}^{\mathbb{N}}\right\} $ is a $\mathfrak{G}$-base of neighborhoods of $\beta \left( E^{\prime },E\right) $, the polars $\left\{ V_{\alpha }^{\circ}:\alpha \in \mathbb{N}^{\mathbb{N}}\right\} $ of the sets $V_{\alpha }$ in the bidual $E^{\prime \prime }$ of $E$ compose a compact resolution on $E^{\prime \prime }$ for the weak* topology. Setting $B_{\alpha }:=V_{\alpha }^{\circ}\cap E$ for every $\alpha \in\mathbb{N}^{\mathbb{N}}$, we can see that the family $\left\{ B_{\alpha}:\alpha \in \mathbb{N}^{\mathbb{N}}\right\} $ is a bounded resolution for $\left( E,\sigma \left( E,E^{\prime }\right) \right) $, hence for $E$. If $Q$ is a closed absolutely convex bounded subset of $E$, the polar $Q^{\circ}$ of $Q$ in $E^{\prime }$ is a neighborhood of the origin in $\beta  \left( E^{\prime},E\right) $ and consequently there exists $\beta \in \mathbb{N}^{\mathbb{N}} $ such that $V_{\beta }\subseteq Q^{\circ}$. This implies that $Q^{\circ\circ}\subseteq V_{\beta }^{\circ}$, the polars being taken in $E^{\prime \prime }$. Hence
\begin{equation*}
Q=\overline{Q}^{\sigma \left( E,E^{\prime }\right) }=Q^{\circ\circ}\cap E\subseteq B_{\beta },
\end{equation*}%
which means that the family $\left\{ B_{\alpha }:\alpha \in \mathbb{N}^{\mathbb{N}}\right\} $ swallows the bounded sets of $E$.

(i)$\Rightarrow$(iii) If $\left\{ A_{\alpha }:\alpha \in \mathbb{N}^{\mathbb{N}}\right\} $ is a fundamental bounded resolution of $E$, it is clear that
\begin{equation*}
E^{\prime \prime }=\bigcup \left\{ A_{\alpha }^{\circ\circ}:\alpha \in \mathbb{N}^{\mathbb{N}}\right\} .
\end{equation*}%
Since each $A_{\alpha }^{\circ\circ}$ is absolutely convex and weak* compact, it is a Banach disc. So $\left( E^{\prime \prime }, \sigma \left( E'',E^{\prime }\right) \right) $ has a resolution consisting of weak* compact Banach discs, which means that the weak* bidual $\left( E^{\prime \prime }, \sigma \left( E'',E^{\prime }\right) \right)$ is a quasi-$\left( LB\right) $-space.

(iii)$\Rightarrow$(i) If $\left( E^{\prime \prime }, \sigma \left( E'',E^{\prime }\right) \right) $ is a quasi-$\left( LB\right) $-space, then, by \cite[Theorem~3.5]{kak}, there is a resolution $\left\{ D_{\alpha }:\alpha \in \mathbb{N}^{\mathbb{N}}\right\} $ for $\left( E^{\prime \prime }, \sigma \left( E'',E^{\prime }\right) \right)$ consisting of Banach discs that swallows the Banach discs of $\left( E^{\prime \prime }, \sigma \left( E'',E^{\prime }\right) \right)$. If $Q$ is a bounded subset of $E$, then $Q^{\circ\circ}$ is a weak* compact Banach disc in $\left( E^{\prime \prime }, \sigma \left( E'',E^{\prime }\right) \right)$. Hence, there is $\gamma \in \mathbb{N}^{\mathbb{N}}$ such that $Q^{\circ\circ}\subseteq D_{\gamma }$, so that $Q\subseteq D_{\gamma }\cap E$. Setting $B_{\alpha }:=D_{\alpha }\cap E$ for each $\alpha \in \mathbb{N}^{\mathbb{N}}$, the family $\left\{ B_{\alpha }:\alpha \in \mathbb{N}^{\mathbb{N}}\right\} $ is a fundamental bounded resolution for $E$.

Assume that $E$ is locally complete. Clearly, (i) implies (iv). Let us show that (iv) implies (v). Since $E$ is locally complete, the absolutely convex closed envelope of any bounded set of $E$ is a Banach disc by Proposition 5.1.6 of \cite{bonet}. Thus the space $E$ is a quasi-$(LB)$-space.

(v)$\Rightarrow$(i)  By Valdivia's   theorem \cite[Theorem~3.5]{kak}, there exists another quasi-$(LB)$-representation $\mathcal{B}=\{ B_\alpha:\alpha\in\NN^\NN\}$ of $E$ swallowing all Banach discs of $E$. Since each bounded set of $E$ is contained in a Banach disc and each Banach disc is bounded, we obtain that $\mathcal{B}$ is a bounded resolution which swallows all bounded sets of $E$.
\end{proof}

Below we apply Theorem \ref{t:G-base-strong} to function spaces $\CC(X)$ and $C_p(X)$.


\begin{corollary} \label{c:Ck-strong-dual-G}
Let $X$ be such that $\CC(X)$ is locally complete (for instance, $X$ is a $k_\IR$-space). Then the following assertions are equivalent:
\begin{enumerate}
\item[{\rm (i)}] $\CC(X)$ has  a bounded  resolution;
\item[{\rm (ii)}] $\CC(X)$ has  a  fundamental bounded  resolution.
\end{enumerate}
If in addition $X$ is metrizable, then (i)-(iv) are equivalent to
\begin{enumerate}
\item[{\rm (iii)}] $C_p(X)$ has  a bounded  resolution;
\item[{\rm (iv)}] $X$ is $\sigma$-compact.
\end{enumerate}
\end{corollary}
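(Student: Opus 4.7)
My strategy is to extract the first equivalence directly from Theorem \ref{t:G-base-strong} and, for the metrizable part, to close the cycle
\[
(\mathrm{iv}) \Rightarrow (\mathrm{i}) \Rightarrow (\mathrm{iii}) \Rightarrow (\mathrm{iv}),
\]
leaning on the equivalence (i)$\Leftrightarrow$(ii) already established.

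For (i)$\Leftrightarrow$(ii), I would simply instantiate Theorem \ref{t:G-base-strong} at $E=\CC(X)$: since $\CC(X)$ is locally complete by hypothesis, the second block of that theorem's equivalences is active and items (i), (iv) there correspond to (ii), (i) of the corollary. The parenthetical ``for instance'' clause uses the classical fact that $\CC(X)$ is complete, and hence locally complete, whenever $X$ is a $k_\IR$-space.

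For the metric case, every metrizable $X$ is a $k$-space, so $\CC(X)$ is complete and the first equivalence applies automatically. The implication (i)$\Rightarrow$(iii) is immediate: the identity $\CC(X)\to C_p(X)$ is continuous, hence every set bounded in $\CC(X)$ is bounded in $C_p(X)$, and any bounded resolution of $\CC(X)$ is, verbatim, a bounded resolution of $C_p(X)$. For (iv)$\Rightarrow$(i), I would use that a $\sigma$-compact metric space $X=\bigcup_n K_n$ is analytic and therefore carries a fundamental compact resolution $\{L_\alpha:\alpha\in\NN^\NN\}$ by a classical theorem of Christensen. Combining this resolution with the $\sigma$-compact exhaustion via the pair-encoding of $\NN^\NN$ used in the proof of Proposition \ref{p:Stability-FBR}, one builds an $\NN^\NN$-increasing family of $\CC(X)$-bounded sets covering $C(X)$; Theorem \ref{t:G-base-strong}(iv)$\Rightarrow$(i) then upgrades this bounded resolution to a fundamental one.

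The main obstacle is (iii)$\Rightarrow$(iv). Given a bounded resolution $\{A_\alpha\}$ of $C_p(X)$ with $X$ metrizable, I must extract $\sigma$-compactness of $X$. The lower semicontinuous envelopes $M_\alpha(x)=\sup_{f\in A_\alpha}|f(x)|$ are finite valued on all of $X$ because each $A_\alpha$ is pointwise bounded, and the closed level sets $F_{\alpha,n}=\{x\in X:M_\alpha(x)\leq n\}$ give a closed cover of $X$ indexed by $\NN^\NN\times\NN$. My plan is to argue by contradiction: if $X$ were not $\sigma$-compact, then I would use the metric structure together with a diagonal construction against the $\NN^\NN$-parametrization to build a continuous function on $X$ lying outside every $A_\alpha$, contradicting that $\{A_\alpha\}$ exhausts $C(X)$. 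Turning this $\NN^\NN\times\NN$-cover into a countable cover by relatively compact sets is the technical heart of the argument.
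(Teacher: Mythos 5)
Your treatment of (i)$\Leftrightarrow$(ii) via Theorem \ref{t:G-base-strong} and of the trivial implication (i)$\Rightarrow$(iii) matches the paper's, but the metrizable part of your proposal has two genuine problems. First, in (iv)$\Rightarrow$(i) you assert that a $\sigma$-compact metric space carries a \emph{fundamental} compact resolution ``by a classical theorem of Christensen.'' This is false: Christensen's theorem says that a metrizable space has a fundamental compact resolution if and only if it is Polish, and $\Q$ is $\sigma$-compact, metrizable and analytic but not Polish, hence has no fundamental compact resolution. Nor can you fall back on the naive exhaustion $X=\bigcup_n K_n$: sets of the form $\{f:\sup_{x\in K_n}|f(x)|\le\alpha(n)\ \forall n\}$ need not be bounded in $\CC(X)$, because the $K_n$ need not swallow the compact subsets of $X$ (again $\Q$ is a counterexample, since it is not hemicompact). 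What the paper actually invokes here is that the \emph{function space} $\CC(X)$ itself has a fundamental compact resolution when $X$ is $\sigma$-compact and metrizable (Corollary 2.10 of \cite{GK-Free-LCS-G}); that is a statement about $\CC(X)$, not about $X$, and it is exactly the ingredient your construction is missing.

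Second, for (iii)$\Rightarrow$(iv) you only describe a plan (``a diagonal construction against the $\NN^\NN$-parametrization\dots is the technical heart of the argument'') without carrying it out. This implication is precisely the nontrivial content of Corollary 9.2 of \cite{kak} (in the line of the Calbrix and Arhangel'skii--Calbrix results: for metrizable, or more generally cosmic, $X$ the space $C_p(X)$ has a bounded resolution if and only if $X$ is $\sigma$-compact), and the paper simply cites it. As written, your proposal neither cites this result nor supplies the construction, so the cycle (iv)$\Rightarrow$(i)$\Rightarrow$(iii)$\Rightarrow$(iv) is not closed. Both gaps are repairable by citation, but neither is a detail one can wave away.
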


\begin{proof}
The equivalences (i)-(ii) follow from Theorem \ref{t:G-base-strong}. The equivalence (iii)$\Leftrightarrow$(iv) is Corollary 9.2 of \cite{kak}, and the implication (i)$\Rightarrow$(iii) is trivial. The implication (iv)$\Rightarrow$(i) follows from Corollary 2.10 of \cite{GK-Free-LCS-G} which states that $\CC(X)$ has even a fundamental compact resolution.
\end{proof}

Observe that if $\CC(X)$  has a fundamental bounded resolution,  $\CC(X)$ need not be metrizable. For instance, $\CC(Q)$ has a fundamental bounded resolution by condition (vi) of the previous corollary, but $\CC(Q)$ is not metrizable.

\begin{corollary} \label{c:Ck-strong-dual-G-fundam}
Let $X$ be such that $\CC(X)$ is locally complete. If $X$ has an increasing sequence of functionally bounded subsets which swallows the compact sets of $X$, then  the strong dual of $\CC(X)$ has a $\GG$-base.
\end{corollary}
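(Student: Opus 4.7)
The plan is to reduce everything to Theorem \ref{t:G-base-strong}. Since $\CC(X)$ is assumed locally complete, the equivalences there tell us that the strong dual of $\CC(X)$ has a $\GG$-base iff $\CC(X)$ has a bounded resolution. So it suffices to build an $\NN^\NN$-increasing family of bounded subsets of $\CC(X)$ covering $C(X)$; no fundamentality needs to be checked by hand.

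For the construction, let $\{K_n\}_{n\in\NN}$ be the given increasing sequence of functionally bounded subsets of $X$ that swallows the compact sets. For each $\alpha=(\alpha(n))\in\NN^\NN$ I would set
\[
B_\alpha := \{ f\in C(X) : |f(x)|\le \alpha(n) \text{ for all } x\in K_n \text{ and all } n\in\NN\}.
\]
This family is $\NN^\NN$-increasing directly from the definition. Each $B_\alpha$ is bounded in $\CC(X)$ because, for any compact $K\subseteq X$, the swallowing hypothesis yields an $n$ with $K\subseteq K_n$, and then $\sup_{f\in B_\alpha}\sup_{x\in K}|f(x)|\le \alpha(n)<\infty$. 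The family covers $C(X)$ because for any $f\in C(X)$, functional boundedness of each $K_n$ gives $M_n(f):=\sup_{x\in K_n}|f(x)|<\infty$, so taking $\alpha(n):=\lceil M_n(f)\rceil$ produces an $\alpha\in\NN^\NN$ with $f\in B_\alpha$.

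Thus $\{B_\alpha:\alpha\in\NN^\NN\}$ is a bounded resolution in $\CC(X)$. Local completeness of $\CC(X)$ together with the equivalence (iv)$\Leftrightarrow$(ii) in Theorem \ref{t:G-base-strong} then delivers a $\GG$-base of the strong dual of $\CC(X)$, completing the proof.

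The argument is essentially a direct construction, so there is no real obstacle; the only point that deserves attention is the distinction between \emph{functionally bounded} and \emph{compact}: one uses functional boundedness of $K_n$ only to guarantee each continuous $f$ is numerically bounded on $K_n$ (so that $B_\alpha$ covers $C(X)$), and one uses the swallowing property to show that uniform boundedness of the family $\{B_\alpha\}$ on every $K_n$ transfers to the usual notion of boundedness in $\CC(X)$.
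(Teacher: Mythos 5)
Your proof is correct, and it reaches the conclusion by a more self-contained route than the paper does. The paper does not construct the bounded resolution by hand: it invokes Theorem~3.1(ii) of \cite{FGK} to produce a metrizable locally convex topology $\TTT$ on $C(X)$ finer than the compact-open topology, observes via Corollary~\ref{c:metrizable-fbr} that $(C(X),\TTT)$ has a fundamental bounded resolution, notes that $\TTT$-bounded sets are $\tau_k$-bounded so this family is a bounded resolution for $\CC(X)$, and then applies Corollary~\ref{c:Ck-strong-dual-G}. Your sets $B_\alpha$ are, in effect, exactly the canonical bounded resolution of that metrizable topology (the one generated by the seminorms $f\mapsto\sup_{x\in K_n}|f(x)|$), so the two arguments have the same mathematical core; but yours avoids the external citation entirely, and your division of labour is clean and correct: functional boundedness of each $K_n$ gives the covering, the swallowing property gives $\CC(X)$-boundedness of each $B_\alpha$, and the final step is the equivalence (iv)$\Leftrightarrow$(ii) of Theorem~\ref{t:G-base-strong} under local completeness. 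What the paper's detour buys is the extra structural information that the resolution is the trace of a \emph{fundamental} bounded resolution of a finer metrizable topology; what your version buys is transparency. The only cosmetic point is that $\lceil M_n(f)\rceil$ may be $0$, so you should take $\alpha(n):=\lceil M_n(f)\rceil+1$ (or $\max\{1,\lceil M_n(f)\rceil\}$) to stay in $\NN^{\NN}$.
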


\begin{proof}
By Theorem 3.1(ii) of \cite{FGK}, there is a metrizable locally convex topology $\TTT$ on $C(X)$ stronger than the compact-open topology, and hence $(C(X),\TTT)$ has a  fundamental bounded resolution by Corollary \ref{c:metrizable-fbr}. So $\CC(X)$ has a bounded resolution and Corollary \ref{c:Ck-strong-dual-G} applies.
\end{proof}

Recall that a lcs $E$ is called \emph{quasibarrelled} if every closed absolutely convex bornivorous subset of $E$ is a neigbourhood of zero, see \cite{bonet} or \cite{Smolyanov}, \cite{Jar}. Trivially, every metrizable lcs, as well as, any $(LM)$-space is quasibarrelled.

We  supplement  Theorem \ref{t:G-base-strong} with the following fact. Recall that $E$ is \emph{dual locally complete } if $(E',\sigma(E',E))$ is locally complete, see \cite{saxon}. Note that a lcs $E$ is barrelled if and only if $E$ is a quasibarelled dual locally complete space, \cite{bonet}.
\begin{proposition} \label{p:quasibarrelled-strong-dual-res}
The following statements hold true.
\begin{enumerate}
\item[{\rm (i)}]  Let $E$ be dual locally complete. If $E$ has a $\GG$-base, then $E'_\beta$ has a fundamental bounded resolution.
\item[{\rm (ii)}]  Let $E$ be a quasibarrelled space. Then the strong dual $E'_\beta$ of $E$ has a fundamental bounded resolution if and only if $E$ has a $\GG$-base.
\end{enumerate}
\end{proposition}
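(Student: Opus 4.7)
The plan for both parts is to pass between the given $\GG$-base $\{U_\alpha\}$ of $E$ and its polar family $\{U_\alpha^\circ\}\subseteq E'$, and symmetrically between a fundamental bounded resolution $\{B_\alpha\}$ of $E'_\beta$ and its polar family $\{B_\alpha^\circ\}\subseteq E$, exploiting the bipolar theorem together with the observation that ``$B^\circ$ is bornivorous in $E$'' is just a restatement of ``$B$ is $\beta(E',E)$-bounded''.

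For part (i), I would begin with the $\GG$-base $\{U_\alpha\}$ of $E$ and take polars. Each $U_\alpha^\circ$ is absolutely convex, $\sigma(E',E)$-compact by Alaoglu--Bourbaki, and equicontinuous, so in particular $\beta(E',E)$-bounded; moreover, being absolutely convex and weak*-compact, it is a Banach disc. The $\NN^\NN$-increasing family $\{U_\alpha^\circ\}$ therefore witnesses that $E'_\beta$ is a quasi-$(LB)$-space. Next I would verify local completeness of $E'_\beta$: given a $\beta(E',E)$-bounded set $B\subseteq E'$, it is $\sigma(E',E)$-bounded, and dual local completeness of $E$ puts it inside some Banach disc $D$ of $(E',\sigma(E',E))$; the classical Banach--Mackey theorem then forces $D$ to be $\beta(E',E)$-bounded, making it a Banach disc of $E'_\beta$. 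With both ingredients in hand, the implication (v)$\Rightarrow$(i) of Theorem \ref{t:G-base-strong}, applied to $F:=E'_\beta$, delivers a fundamental bounded resolution on $E'_\beta$.

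For part (ii), the implication ``$\GG$-base $\Rightarrow$ fundamental bounded resolution on $E'_\beta$'' is handled by the same polars $\{U_\alpha^\circ\}$: they are equicontinuous and hence $\beta$-bounded, and to show they swallow every $\beta$-bounded $B\subseteq E'$ I would take the polar $B^\circ\subseteq E$, note that it is absolutely convex, closed, and bornivorous, invoke the quasibarrelledness of $E$ to conclude that $B^\circ$ is a zero neighborhood, and then combine $U_\alpha\subseteq B^\circ$ with the bipolar theorem to get $B\subseteq B^{\circ\circ}\subseteq U_\alpha^\circ$. For the converse, starting with a fundamental bounded resolution $\{B_\alpha\}$ of $E'_\beta$, the polars $\{B_\alpha^\circ\}$ in $E$ are absolutely convex, closed, and bornivorous, hence zero neighborhoods by quasibarrelledness, and they are $\NN^\NN$-decreasing. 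To see they form a base, I would take any closed absolutely convex zero neighborhood $U$ of $E$, observe that $U^\circ$ is equicontinuous and thus $\beta$-bounded, hence $U^\circ\subseteq B_\alpha$ for some $\alpha$, and conclude $B_\alpha^\circ\subseteq U^{\circ\circ}=U$ by the bipolar theorem.

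The principal obstacle lies in part (i), where quasibarrelledness is unavailable. The delicate point is transferring local completeness of $(E',\sigma(E',E))$ to local completeness of $E'_\beta$; this is exactly where Banach--Mackey, guaranteeing that $\sigma(E',E)$-bounded Banach discs are automatically $\beta(E',E)$-bounded, is indispensable, after which Theorem \ref{t:G-base-strong} closes the argument.
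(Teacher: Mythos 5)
Your argument is correct. Part (i) and the forward implication of (ii) run essentially along the paper's lines: in (i) the paper likewise passes to the polars $U_\alpha^{\circ}$, observes they are weak*-compact absolutely convex (hence Banach discs), invokes Valdivia's theorem to get a fundamental Banach-disc resolution, and then uses dual local completeness together with the Banach--Mackey principle (``Banach discs are $\beta(E',E)$-bounded'') exactly where you do; your repackaging of this as ``verify $E'_\beta$ is a locally complete quasi-$(LB)$-space and quote (v)$\Rightarrow$(i) of Theorem \ref{t:G-base-strong}'' is the same argument applied to $E'_\beta$ rather than to $(E',\sigma(E',E))$, and your verification that every $\beta$-bounded set lies in a Banach disc of $E'_\beta$ is precisely the property that implication consumes. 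In the forward half of (ii) the paper also uses the polars $U_\alpha^{\circ}$ and quasibarrelledness to see that a strongly bounded set is equicontinuous and hence swallowed; your bipolar-theorem unwinding of ``$B^{\circ}$ is bornivorous, hence a neighbourhood'' is the same fact stated dually.

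Where you genuinely diverge is the converse implication in (ii). The paper deduces from the fundamental bounded resolution of $E'_\beta$ that the strong bidual $E''_\beta$ has a $\GG$-base (via Theorem \ref{t:G-base-strong}) and then uses quasibarrelledness only to embed $E$ topologically into $E''_\beta$, so that $E$ inherits the $\GG$-base. You instead take the polars $B_\alpha^{\circ}$ in $E$ directly: they are closed, absolutely convex and bornivorous (this is a restatement of $\beta$-boundedness of $B_\alpha$), hence neighbourhoods of zero by quasibarrelledness, they are $\NN^{\NN}$-decreasing, and the fundamentality of $\{B_\alpha\}$ plus the bipolar theorem shows they form a base. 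Your route is more self-contained and elementary, using quasibarrelledness twice at the level of polars rather than once to invoke the bidual embedding; the paper's route is shorter on the page because it recycles Theorem \ref{t:G-base-strong} and the standard fact that a quasibarrelled space sits topologically inside its strong bidual. Both are valid, and your version has the mild advantage of exhibiting the $\GG$-base of $E$ explicitly as $\{B_\alpha^{\circ}\}$.
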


\begin{proof}
(i) Let $\{ U_\alpha: \alpha\in\NN^\NN\}$ be a $\GG$-base in $E$. Then the polar sets $W_\alpha:=U_\alpha^\circ$ are weakly$^{\ast}$-compact and absolutely convex, hence  Banach discs, so $(E',\sigma(E',E))$ is  a quasi-$(LB)$-space. Again Valdivia's  theorem \cite[Theorem~3.5]{kak} applies to get that $(E',\sigma(E',E))$ has a fundamental resolution $\mathcal{B}=\{ B_\alpha: \alpha\in\NN^\NN\}$  consisting of Banach discs. As  $(E',\sigma(E',E))$ is locally complete, every $\sigma(E',E)$-bounded set $B$ is included in a Banach disc by Proposition 5.1.6 of \cite{bonet}. Since Banach discs also are $\beta(E',E)$-bounded, the family  $\mathcal{B}$ is a  fundamental bounded resolution.

(ii) Assume that $E'_\beta$ has a fundamental bounded resolution. Then the strong bidual space $E''_\beta$ of $E$  has a $\GG$-base by  Theorem \ref{t:G-base-strong}. Since $E$ is quasibarrelled, $E$ is a subspace of $E''_\beta$ by Theorem 15.2.3 of \cite{NaB}. Therefore $E$ has a $\GG$-base. Conversely, assume that $E$ has a $\GG$-base $\{ U_\alpha: \alpha\in\NN^\NN\}$. Then the polar sets $W_\alpha:=U_\alpha^\circ$ are weakly$^{*}$-compact and absolutely convex, and hence $W_\alpha$ are $\beta(E',E)$-bounded by Theorem 11.11.5 of \cite{NaB}. To show that $\{ W_\alpha: \alpha\in\NN^\NN\}$ is a fundamental bounded resolution in $E'$, fix a strongly bounded subset $B$ of $E'$. Since $E$ is quasibarrelled, $W_\alpha$ is equicontinuous by Theorem 11.11.4 of \cite{NaB}. So there is $\alpha\in\NN^\NN$ such that $B\subseteq W_\alpha$.
\end{proof}

A subset $A$ of a Tychonoff space $X$ is {\em $b$-bounding} if for every bounded subset $B$ of $\CC(X)$ the number $\sup\{ |f(x)|: x\in A, f\in B\}$ is finite. The space $X$ is called a {\em $W$-space} if every $b$-bounding subset of $X$ is relatively compact.
\begin{corollary} \label{c:strong-dual-Ck-fund-res}
Let $X$ be a $W$-space (for example, $X$ is realcompact). Then the strong dual $E$ of $\CC(X)$ has a fundamental bounded resolution if and only if $X$ has a fundamental compact resolution.
\end{corollary}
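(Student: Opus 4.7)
The approach is to use the canonical Dirac embedding $\delta\colon X\to \CC(X)'$, $x\mapsto\delta_x$, as a bridge between topological properties of $X$ and bounded-set properties of the strong dual of $\CC(X)$. Two facts drive the argument. First, a set $A\subseteq X$ is $b$-bounding exactly when $\{\delta_x:x\in A\}$ is $\beta(\CC(X)',\CC(X))$-bounded, so the $W$-space hypothesis converts strong boundedness on the dual side into relative compactness on the $X$ side. Second, for compact $K\subseteq X$ the set $\{\delta_x:x\in K\}$ is equicontinuous, being contained in the polar of the standard neighborhood $U_K:=\{f\in C(X):\sup_K|f|\leq 1\}$, hence automatically strongly bounded.

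For the necessity, let $\{B_\alpha:\alpha\in\NN^\NN\}$ be a fundamental bounded resolution of $\CC(X)'_\beta$. Put $A_\alpha:=\{x\in X:\delta_x\in B_\alpha\}$ and $K_\alpha:=\overline{A_\alpha}$. Since $\{\delta_x:x\in A_\alpha\}\subseteq B_\alpha$ is strongly bounded, $A_\alpha$ is $b$-bounding, and the $W$-space property forces $K_\alpha$ to be compact. The family $\{K_\alpha\}$ is $\NN^\NN$-increasing and covers $X$, because every Dirac functional is contained in some $B_\alpha$; moreover it swallows compacta, since for any compact $K\subseteq X$ the equicontinuous (hence strongly bounded) set $\{\delta_x:x\in K\}$ lies in some $B_\alpha$, forcing $K\subseteq A_\alpha\subseteq K_\alpha$. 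Thus $\{K_\alpha\}$ is a fundamental compact resolution of $X$.

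For the sufficiency, let $\{K_\alpha:\alpha\in\NN^\NN\}$ be a fundamental compact resolution of $X$. Writing every $\alpha\in\NN^\NN$ as $(\alpha(1),\alpha^\ast)$ with $\alpha^\ast:=(\alpha(k+1))_{k\in\NN}$, the sets $U_\alpha:=\alpha(1)^{-1} U_{K_{\alpha^\ast}}$ form a $\GG$-base of absolutely convex neighborhoods of zero in $\CC(X)$. Taking polars yields an $\NN^\NN$-increasing family $W_\alpha:=U_\alpha^\circ$ of equicontinuous, hence strongly bounded, subsets of $\CC(X)'$ that covers $\CC(X)'$. The remaining and main step is to verify that $\{W_\alpha\}$ swallows every strongly bounded subset of $\CC(X)'$, which amounts to showing that $\CC(X)$ is quasibarrelled; I expect this to be the principal obstacle. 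The cleanest plan is to invoke Proposition \ref{p:quasibarrelled-strong-dual-res}(ii) once quasibarreledness has been secured from the $W$-space hypothesis, which should follow by showing that for a strongly bounded $C\subseteq\CC(X)'$ the union of the compact supports of its elements is $b$-bounding (hence relatively compact by the $W$-space assumption), whence $C$ is concentrated on a compact set and is equicontinuous. Alternatively, one may invoke Proposition \ref{p:quasibarrelled-strong-dual-res}(i), which only needs dual local completeness of $\CC(X)$ together with the $\GG$-base just constructed; under the stated hypotheses (and in particular when $X$ is realcompact) this should again be within reach.
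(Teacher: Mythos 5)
Your necessity argument is correct and genuinely different from the paper's: you transport the fundamental bounded resolution of $E$ directly to $X$ through the Dirac map, using that $A\subseteq X$ is $b$-bounding exactly when $\{\delta_x:x\in A\}$ is strongly bounded, and that $\{\delta_x:x\in K\}\subseteq U_K^{\circ}$ is strongly bounded for compact $K$. That direction needs no quasibarrelledness at all, which is a small gain in self-containment. The paper instead disposes of both implications in three lines: $\CC(X)$ is quasibarrelled because $X$ is a $W$-space (Theorem 10.1.21 of \cite{bonet}), so by Proposition \ref{p:quasibarrelled-strong-dual-res}(ii) the strong dual has a fundamental bounded resolution if and only if $\CC(X)$ has a $\GG$-base, and the latter holds if and only if $X$ has a fundamental compact resolution by \cite{feka}.

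The gap is in your sufficiency direction, and you have located it yourself: everything reduces to showing that the polars $U_\alpha^{\circ}$ swallow the strongly bounded sets, i.e.\ that $\CC(X)$ is quasibarrelled. This is precisely the known, nontrivial characterization that $\CC(X)$ is quasibarrelled if and only if $X$ is a $W$-space; the clean fix is to cite it (Theorem 10.1.21 of \cite{bonet}) rather than reprove it. Your proposed sketch --- that for a strongly bounded $C\subseteq\CC(X)'$ the union of the supports of its elements is $b$-bounding --- does not go through as written: from $\sup\{|\langle\mu,f\rangle|:\mu\in C,\ f\in B\}<\infty$ one cannot directly bound $|f(x)|$ for $x\in\supp\mu$, since a point of the support of a measure is not dominated by that measure in any way that survives the pairing with $f$ (cancellation); the actual proof requires a finer decomposition of the functionals. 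The fallback via Proposition \ref{p:quasibarrelled-strong-dual-res}(i) is also not free, because dual local completeness of $\CC(X)$ is not evident for a general $W$-space (it would follow from barrelledness, i.e.\ from $X$ being a $\mu$-space, but a $W$-space need not obviously be one). So keep your direct necessity argument if you wish, but for sufficiency quote the quasibarrelledness theorem and conclude with Proposition \ref{p:quasibarrelled-strong-dual-res}(ii).
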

\begin{proof}
Note that $\CC(X)$ is quasibarrelled by Theorem 10.1.21 of \cite{bonet}. Therefore, by Proposition \ref{p:quasibarrelled-strong-dual-res}, $E$ has a fundamental bounded resolution if and only if the space $\CC(X)$ has a $\GG$-base. But $\CC(X)$ has a $\GG$-base if and only if $X$ has a fundamental compact resolution by \cite{feka}.
\end{proof}

By an $(LM)$-space $E:=(E,\tau)$ we mean a lcs which is the  countably inductive limit of an increasing sequence $(E_{n},\tau_{n})$ of metrizable lcs such that $E=\bigcup_{n}E_{n}$  and $\tau_{n}|E_{n}\leq \tau_{n}$ for each $n\in\mathbb{N}$. The inductive limit topology $\tau$ of $E$ is the finest locally convex topology on $E$ such that $\tau|E_{n}\leq\tau_{n}$ for each $n\in\mathbb{N}$.  If each step $(E_{n},\tau_{n})$ is a Fr\'echet lcs, i.e. a metrizable and complete lcs, we call $E$ an $(LF)$-space. Moreover, if additionally  $\tau_{n+1}|E_{n}=\tau_{n}$ for each $n\in\mathbb{N}$ the inductive limit space $E$ is called \emph{strict}.  The latter case implies that every bounded set in $E$ is contained and bounded in some $E_{n}$. Recall that $(LM)$-spaces  enjoying this property are called \emph{regular.} We refer the reader to \cite[Definition 8.5.11]{bonet}  or to \cite{Jar} for details.

Next Proposition \ref{p:dual-regular-LM} provides fundamental bounded resolutions  for regular $(LM)$-spaces.

\begin{proposition} \label{p:dual-regular-LM}
Let $E$ be an  $(LM)$-space. Then $E$ has a  bounded resolution. If in addition $E$ is regular, then $E$ has a fundamental bounded resolution, and consequently, $E^{\prime}_{\beta}$ has a $\mathfrak{G}$-base.
\end{proposition}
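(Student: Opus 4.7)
Write $E = \bigcup_{n \in \NN} E_n$ with metrizable steps $(E_n, \tau_n)$. By Corollary \ref{c:metrizable-fbr}, each $E_n$ admits a fundamental bounded resolution $\{ B^n_\alpha : \alpha \in \NN^\NN\}$; the plan is to weave these into a bounded resolution of $E$ by piggybacking on the encoding of $\NN^\NN$-indices used in the proof of Proposition \ref{p:Stability-FBR}. Fix a partition $\NN = \bigsqcup_{i \in \NN} N_i$ into infinite sets $N_i = \{n_{k,i}\}_{k \in \NN}$, write $\alpha^\ast := (\alpha(m+1))_{m \in \NN}$, and set $\alpha^\ast_i(k) := \alpha^\ast(n_{k,i})$ for $\alpha \in \NN^\NN$. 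Both $\alpha \mapsto \alpha(1)$ and $\alpha \mapsto \alpha^\ast_i$ are coordinatewise monotone, while the scalar $\alpha(1)$ together with the countable sequence $(\alpha^\ast_i)_{i \in \NN}$ can be prescribed independently (since $\{1\}$ and the sets $\{n_{k,i}+1 : k, i \in \NN\}$ partition $\NN$).

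Define $B_\alpha := \bigcup_{n=1}^{\alpha(1)} B^n_{\alpha^\ast_n}$ for each $\alpha \in \NN^\NN$. Continuity of the inclusions $(E_n, \tau_n) \hookrightarrow (E, \tau)$ makes each $B^n_{\alpha^\ast_n}$ bounded in $E$, so $B_\alpha$ is a finite union of bounded sets and therefore bounded. The monotonicity $B_\alpha \subseteq B_\beta$ whenever $\alpha \leq \beta$ follows at once from $\alpha(1) \leq \beta(1)$ and $\alpha^\ast_n \leq \beta^\ast_n$, together with the fact that each $\{B^n_\gamma\}_\gamma$ is a resolution of $E_n$. For coverage, any $x \in E$ lies in some $E_n$ and hence in $B^n_\gamma$ for some $\gamma \in \NN^\NN$; choosing $\alpha$ with $\alpha(1) = n$ and $\alpha^\ast_n = \gamma$ yields $x \in B^n_\gamma \subseteq B_\alpha$. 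Thus $\{ B_\alpha : \alpha \in \NN^\NN\}$ is a bounded resolution of $E$, establishing the first assertion.

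Suppose in addition that $E$ is regular. Given a bounded set $B \subseteq E$, regularity yields $n$ with $B \subseteq E_n$ and $B$ bounded in $(E_n, \tau_n)$; picking $\gamma \in \NN^\NN$ with $B \subseteq B^n_\gamma$ (possible because $\{B^n_\gamma\}$ is fundamental in $E_n$) and then $\alpha$ with $\alpha(1) = n$ and $\alpha^\ast_n = \gamma$ gives $B \subseteq B^n_\gamma \subseteq B_\alpha$, so $\{B_\alpha\}$ swallows the bounded sets of $E$ and is a fundamental bounded resolution. That $E'_\beta$ has a $\GG$-base is then immediate from the equivalence (i)$\Leftrightarrow$(ii) of Theorem \ref{t:G-base-strong}. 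The only conceptual obstacle is reconciling a priori unrelated resolutions on the different steps; the encoding trick reduces this to routine bookkeeping, and the regularity hypothesis is exactly what is required to upgrade ``covers'' to ``swallows.''
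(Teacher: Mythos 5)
Your proof is correct and follows essentially the same route as the paper: build a fundamental bounded resolution on each metrizable step, use $\alpha(1)$ to select the step(s) and the tail of $\alpha$ to index within them, and invoke regularity plus Theorem \ref{t:G-base-strong} for the final claims. The only difference is cosmetic but welcome: by taking the union $\bigcup_{n=1}^{\alpha(1)} B^n_{\alpha^\ast_n}$ with independently encoded indices, you make the $\NN^\NN$-monotonicity of $\{B_\alpha\}$ automatic, whereas the paper's single-set choice $B_\alpha = W^{\alpha(1)}_{\alpha^\ast}$ tacitly relies on choosing the neighbourhood bases of the steps compatibly so that $W^{i}_{\gamma} \subseteq W^{j}_{\gamma}$ for $i \leq j$.
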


\begin{proof}
Let $(E_{i},\tau _{i})$ be an increasing sequence of metrizable lcs generating the inductive limit space $E=(E,\tau )$, i.\thinspace e., $\tau _{i+1}|_{E_{i}}\leq \tau _{i}$ and $\tau |_{E_{i}}\leq \tau _{i}$ for any $i\in \mathbb{N}$.
For each $i\in \mathbb{N}$, let $\{ U_{k}^{i}\}_{k\in\NN}$ be a decreasing base of neighbourhoods of zero for $E_{i}$. For every $i\in \mathbb{N}$ and $\alpha \in \mathbb{N}^{\mathbb{N}}$, set $W_{\alpha }^{i}:=\bigcap_{k\in\NN }\alpha(k)U^{i}_{k}$. Any $W_{\alpha }^{i}$ is bounded in $\tau _{i}$, consequently in $\tau $ too. It is easy to see that the family $\{ W_\alpha^i : \alpha\in\NN^\NN\}$ is a fundamental bounded resolution in $E_i$.
Now, for every $\alpha=\big( \alpha(k)\big)\in\NN^\NN$, set $\alpha^\ast := \big( \alpha(k+1)\big)$ and  $B_\alpha:= W_{\alpha^\ast}^{\alpha(1)}$. Clearly, the family $\{ B_\alpha: \alpha\in\NN^\NN\}$ is a desired bounded resolution in $E$.

Assume that $E$ is regular. Then every $\tau $-bounded set is contained in some $E_{i}$ and is $\tau _{i}$-bounded. Therefore the bounded resolution $\{ B_\alpha: \alpha\in\NN^\NN\}$ is fundamental. Finally, the space  $E^{\prime}_{\beta}$ has a $\mathfrak{G}$-base by Theorem \ref{t:G-base-strong}.
\end{proof}

\begin{corollary} \label{c:image-metrizable-dual}
Let $E$ be a locally complete lcs which is an image of an infinite-dimensional metrizable topological vector space under a continuous linear map. Then every precompact set in $E_{\beta }^{\prime }$ is metrizable.
\end{corollary}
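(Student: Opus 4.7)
The plan is to first install a fundamental bounded resolution on $E$, then transport the structure to the strong dual via Theorem \ref{t:G-base-strong}, and finally apply the Cascales--Orihuela metrizability theorem for precompact subsets of spaces in class $\GG$.

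First I would fix a continuous linear surjection $T:M\to E$ with $M$ an infinite-dimensional metrizable tvs, together with a decreasing base $\{U_n\}_{n\in\NN}$ of neighborhoods of $0$ in $M$. Setting $B_\alpha:=\bigcap_{n\in\NN}\alpha(n)U_n$ for $\alpha\in\NN^\NN$ and arguing exactly as in Proposition \ref{p:LCS-bounded-resol} (whose proof uses only a decreasing base and makes no appeal to local convexity), one checks that $\{B_\alpha:\alpha\in\NN^\NN\}$ is an $\NN^\NN$-increasing family of bounded sets that covers $M$. Applying the continuous linear map $T$ gives an $\NN^\NN$-increasing bounded resolution $\{T(B_\alpha):\alpha\in\NN^\NN\}$ of $E$.

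Next, the local completeness of $E$ lets me invoke the equivalence (i)$\Leftrightarrow$(iv) of Theorem \ref{t:G-base-strong} to upgrade this bounded resolution to a \emph{fundamental} bounded resolution in $E$. The implication (i)$\Rightarrow$(ii) of the same theorem then produces a $\GG$-base of neighborhoods of zero in $E'_\beta$. Taking polars of such a base (after the routine reindexing that absorbs the scaling $\alpha(1)V_{\alpha^{\ast}}^{\circ}$) exhibits an equicontinuous $\NN^\NN$-resolution on the weak$^{*}$-dual of $E'_\beta$, so that $E'_\beta$ belongs to the class $\GG$.

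The corollary is now immediate from the classical Cascales--Orihuela theorem \cite{CasOr}: every precompact subset of a locally convex space in the class $\GG$ is metrizable. Applied to the precompact subsets of $E'_\beta$, this yields the conclusion. The only non-formal step in the chain is the promotion of the bounded resolution transferred from $M$ to a \emph{fundamental} one, and that step is precisely what the local completeness hypothesis supplies via Theorem \ref{t:G-base-strong}.
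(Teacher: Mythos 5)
Your proof is correct and follows essentially the same route as the paper: transfer a bounded resolution from the metrizable domain to $E$ via the continuous linear surjection, use local completeness and Theorem \ref{t:G-base-strong} to obtain a $\GG$-base on $E'_\beta$, and conclude with the Cascales--Orihuela metrizability theorem. The only difference is that you spell out the intermediate upgrade to a fundamental bounded resolution and the passage to the class $\GG$, which the paper leaves implicit.
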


\begin{proof}
Let $T$  be a continuous linear map from a metrizable tvs $H$ onto $E$. It is well-known that $H$ has a bounded resolution $\{ B_\alpha: \alpha\in\NN^\NN\}$ (cf. the proof of Proposition \ref{p:LCS-bounded-resol}). Then $\{ T(B_\alpha): \alpha\in\NN^\NN\}$ is a bounded resolution on $E$. Now Theorem \ref{t:G-base-strong} implies that $E'_{\beta}$ has a $\GG$-base. Therefore every precompact set in $E'_{\beta}$ is metrtizable by Cascales--Orihuela's theorem, see \cite{kak}.
\end{proof}


\begin{remark} {\em
Analysing the proof of Proposition 1 in \cite{feka-BAMS} one can prove the following result: A lcs $E$ has a bounded resolution if and only if $E_w$ is $K$-analytic-framed in $\IR^\kappa$ for some cardinal $\kappa$, that is there exists a $K$-analytic space $H$ such that $E\subseteq H\subseteq\IR^\kappa$.}
\end{remark}

\begin{remark}{\em
Christensen's theorem \cite[Theorem~6.1]{kak}  states that a metrizable space $X$ has a fundamental compact resolution if and only if $X$ is Polish. Therefore every {\em separable}  infinite-dimensional Banach space $E$ has a fundamental compact  resolution, and this resolution  is not a fundamental bounded resolution (otherwise, the closed unit ball $B$ of $E$ would be compact). On the other hand, every {\em nonseparable } infinite-dimensional metrizable space $E$ has a fundamental {\em bounded } resolution by Corollary \ref{c:metrizable-fbr}, but $E$ does not have a fundamental {\em compact} resolution since the space $E$ is not Polish. }
\end{remark}


\section{More about [fundamental] bounded resolutions for spaces $C_{p}(X)$ and $\CC(X)$ and their duals} \label{sec:function}


Tkachuk proved (see  \cite[Theorem~9.14]{kak}) that the space $C_p(X)$ has a  fundamental compact resolution if and only if  $X$ is countable and discrete.  Hence, if for an infinite compact  space $K$, the space $C_p(K)$ is $K$-analytic, then $C_p(K)$ has a compact resolution but it does not have a  fundamental compact resolution. Also, if we consider the case $C\big([0,1]\big)$, then $C_w\big([0,1]\big)$ does not have a  fundamental compact resolution, see  \cite[Corollary 1.10]{MerSta}. Below we prove an analogous result for $C_{p}(X)$  having a fundamental bounded resolution. 

We need the following notion. For every $\alpha\in\NN^\NN$ and each $k\in\NN$, set
\[
I_k(\alpha):=\{ \beta\in\NN^\NN: \; \beta(1)=\alpha(1),\dots,\beta(k)=\alpha(k) \}.
\]

\begin{definition} {\em
A family $\left\{ U_{\alpha ,n}:\left( \alpha ,n\right) \in \mathbb{N}^{\mathbb{N}}\times \mathbb{N}\right\} $ of closed subsets of $X$ is called {\em framing} if
\begin{enumerate}
\item [(1)] for each $\alpha \in \mathbb{N}^{\mathbb{N}}$, the family $\left\{ U_{\alpha ,n}:n\in \mathbb{N}\right\} $ is an increasing covering of $X$, and
\item [(2)] for every $n\in \NN$, $U_{\beta ,n}\subseteq U_{\alpha ,n}$ whenever $\alpha \leq \beta $.
\end{enumerate} }
\end{definition}

Recall that a family $\mathcal{N}$ of subsets of a topological space $X$ is called a  {\em $cs^\ast$-network at  a point} $x\in X$ if for each sequence $\{ x_n\}_{n\in\NN}$ in $X$ converging to  $x$ and for each neighborhood $O_x$ of $x$ there is a set $N\in\mathcal{N}$ such that $x\in N\subseteq O_x$ and the set $\{n\in\NN :x_n\in N\}$ is infinite; $\Nn$ is a {\em $cs^\ast$-network}  in $X$ if $\mathcal{N}$ is a $cs^\ast$-network at each point $x\in X$. 

Now we are ready to prove  the main result of this section.
\begin{theorem}\label{t:FBR-Cp}
The space $C_{p}(X)$ admits  a fundamental bounded resolution if and only if $X$ is countable (so exactly when $C_{p}(X)$ is metrizable).
\end{theorem}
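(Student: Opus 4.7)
The backward direction is immediate from classical facts: if $X$ is countable, then $C_p(X)$ embeds in $\RR^X = \RR^\omega$, which is metrizable, so Corollary \ref{c:metrizable-fbr} provides a fundamental bounded resolution. This also justifies the parenthetical equivalence, since a standard theorem asserts that $C_p(X)$ is metrizable if and only if $X$ is countable.

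For the forward direction, suppose that $\{B_\alpha : \alpha \in \NN^\NN\}$ is a fundamental bounded resolution in $C_p(X)$. My plan is to encode the resolution as a family of closed subsets of $X$ by
\[
U_{\alpha,n} := \{x \in X : |f(x)| \leq n \text{ for every } f \in B_\alpha\}, \qquad (\alpha,n) \in \NN^\NN \times \NN.
\]
The framing axioms are routine to verify: each $U_{\alpha,n}$ is closed as an intersection of sub-level sets of continuous functions; $\{U_{\alpha,n}\}_{n \in \NN}$ is increasing and covers $X$ for fixed $\alpha$ because $B_\alpha$ is pointwise bounded; and the inclusion $B_\alpha \subseteq B_\beta$ for $\alpha \leq \beta$ yields $U_{\beta,n} \subseteq U_{\alpha,n}$.

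The heart of the proof is then to show that this framing is a $cs^*$-network at every point of $X$. Given $x_0 \in X$, a sequence $x_k \to x_0$, and an open neighborhood $O$ of $x_0$, one uses complete regularity to produce a continuous Urysohn function $f_0$ with $f_0(x_0) = 0$, $f_0 \equiv 2$ on $X \setminus O$, and $0 \leq f_0 \leq 2$; builds a suitable bounded enlargement $\tilde B$ of $\{f_0\}$; applies the fundamental bounded resolution to obtain some $\alpha_0$ with $\tilde B \subseteq B_{\alpha_0}$; and then finds $n$ for which $U_{\alpha_0, n}$ lies inside $O$ (because $f_0 \in B_{\alpha_0}$ forces the sup to exceed $1$ outside $O$), contains $x_0$, and catches infinitely many $x_k$ (using continuity of $f_0$ at $x_0$). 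From this $cs^*$-network property of the framing, combined with the framing indexing by $\NN^\NN \times \NN$ and complete regularity, the countability of $X$ is then deduced via the standard principle which is evidently the reason the authors isolated both definitions immediately before this theorem.

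The main obstacle I foresee is the $cs^*$-network verification in the previous step. The resolution provides only the upward inclusion $\tilde B \subseteq B_{\alpha_0}$, so we have no a priori upper bound on $\sup_{f \in B_{\alpha_0}}|f(x_0)|$; consequently making $x_0$ lie in some small $U_{\alpha_0, n}$ cannot be achieved with $\tilde B = \{f_0\}$ alone and requires engineering $\tilde B$ so that every bounded superset has tight control of its sup at $x_0$ and along a cofinal subsequence of $\{x_k\}$. Orchestrating this enlargement, while keeping $\tilde B$ pointwise bounded so that it actually lies in some $B_{\alpha_0}$, is the delicate technical core of the argument.
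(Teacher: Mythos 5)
Your backward direction is fine, and your framing family $U_{\alpha,n}=\{x\in X:\sup_{f\in B_\alpha}|f(x)|\leq n\}$ is exactly the one the paper builds. But the forward direction as you outline it cannot work, for two reasons. First, you aim the $cs^\ast$-network condition at the wrong space: you try to show the framing is a $cs^\ast$-network at the points of $X$. Even if you succeeded, and even if the family were countable, a countable $cs^\ast$-network at each point of $X$ says nothing about the cardinality of $X$ (the real line is first countable and uncountable). The paper instead manufactures from the framing a countable family of subsets of the \emph{function space}, namely $N_{mn}(\alpha)=\{f\in C(X):|f(x)|\leq 1/m\ \forall x\in K_n(\alpha)\}$, proves it is a $cs^\ast$-network at $0\in C_p(X)$, and only then concludes $X$ is countable by Sakai's theorem (using that $C_p(X)$ is $b$-Baire-like). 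Second, your family is indexed by $\NN^{\NN}\times\NN$ and is therefore uncountable, which makes any $cs^\ast$-network statement about it vacuous; the paper's essential intermediate step, which you omit entirely, is to replace $U_{\alpha,n}$ by $K_n(\alpha):=\bigcap_{\beta\in I_n(\alpha)}U_{\beta,n}$, where $I_n(\alpha)$ is the set of $\beta$ agreeing with $\alpha$ in the first $n$ coordinates. Since $K_n(\alpha)$ depends only on $\alpha(1),\dots,\alpha(n)$, the resulting family is countable, and one must check it still covers $X$ for each fixed $\alpha$ (this requires a diagonal argument taking $\gamma=\sup_n\beta_n$) and still swallows increasing closed coverings.

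The ``delicate enlargement of $\tilde B$'' you worry about is a symptom of this misdirection rather than the real technical core. Where fundamentality of the resolution actually enters is the following swallowing property of the framing: for every increasing closed covering $\{V_n\}$ of $X$ one forms the set $P=\{f:\sup_{x\in V_n}|f(x)|\leq n\ \forall n\}$, which is bounded in $C_p(X)$, hence contained in some $B_\delta$, and a Urysohn function argument then yields $U_{\delta,n}\subseteq V_n$ for all $n$. This property is applied not to neighborhoods of points of $X$ but to the coverings $T_n=\bigcap_{i\geq n}\{x:|g_i(x)|\leq 1/m\}$ associated with a null sequence $g_i\to 0$ in $C_p(X)$, which is what makes the $N_{mn}(\alpha)$ catch infinitely many terms of the sequence inside a given basic neighborhood $[F,\e]$ of $0$. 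You would need to restructure your argument along these lines; as written, the plan has no route from the framing to the countability of $X$.
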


\begin{proof}
If $X$ is countable, the space $C_p(X)$  has a fundamental bounded resolution by Corollary \ref{c:metrizable-fbr}. Conversely, assume that $C_{p}\left( X\right) $ has a fundamental bounded resolution $\left\{ B_{\alpha }:\alpha \in \mathbb{N}^{\mathbb{N}}\right\} $. We prove that $X$ is countable in five steps.
First we note the   following simple observation.

\medskip
{\em Step 1. A subset $Q$ of $C_p\left( X\right) $ is bounded if and only if there exists an increasing covering $\left\{ V_{n}:n\in \mathbb{N}\right\} $ of $X$ consisting of closed sets such that
\[
\sup_{f\in Q}\left| f\left( x\right) \right| \leq n \mbox{ for all } x\in V_{n}.
\] }

\smallskip
Indeed, assume that $Q$ is a bounded subset of $C_p(X)$. For every $n\in \mathbb{N}$, set
\begin{equation*}
V_{n}=\left\{ x\in X:\sup\nolimits_{f\in Q}\left| f\left( x\right) \right| \leq n\right\}.
\end{equation*}%
Clearly, all $V_n$ are closed, $V_{n}\subseteq V_{n+1}$ for each $n\in \mathbb{N}$, and $\sup_{f\in Q}\left| f\left( x\right) \right| \leq n$ for all $x\in V_{n}$. If $y\in X$ there is $m\in \mathbb{N}$ with $\sup_{f\in Q}\left| f\left( y\right) \right| \leq m$, so that $y\in V_{m}$. This shows that $\bigcup_{n\in\NN} V_{n}=X$.

The converse assertion is clear.

\medskip
{\em Step 2. There exists a framing family $\left\{ U_{\alpha ,n}:\left( \alpha ,n\right) \in \mathbb{N}^{\mathbb{N}}\times \mathbb{N} \right\} $ in $X$ enjoying the property that if $\left\{ V_{n}:n\in \mathbb{N}\right\} $ is an increasing covering of $X$ consisting of closed sets there exists $\gamma \in \mathbb{N}^{\mathbb{N}}$ such that $U_{\gamma,n}\subseteq V_{n}$ for all $n\in \mathbb{N}$.}

\smallskip
Indeed, for each $\alpha\in\NN^\NN$ and every $n\in\NN$, set
\begin{equation*}
U_{\alpha ,n}=\left\{ x\in X:\sup\nolimits_{f\in B_{\alpha }}\left| f\left( x\right) \right| \leq n\right\}.
\end{equation*}
Then, for each $\alpha \in \mathbb{N}^{\mathbb{N}}$, the family $\left\{ U_{\alpha ,n}:n\in \mathbb{N}\right\} $ is an increasing closed covering of $X$
such that $U_{\beta ,n}\subseteq U_{\alpha ,n}$ whenever $\alpha \leq \beta $, and in addition $\sup\nolimits_{f\in B_{\alpha }}\left| f\left( x\right) \right| \leq n$ for every $x\in U_{\alpha ,n}$ and $n\in \mathbb{N}$. Therefore the family $\mathcal{U}:=\left\{ V_{\alpha ,n}:\left( \alpha ,n\right) \in \mathbb{N}^{\mathbb{N}}\times \mathbb{N}\right\} $ is framing.

We claim that $\mathcal{U}$ satisfies the stated property. Indeed, fix an increasing covering  $\left\{ V_{n}:n\in \mathbb{N}\right\} $ of $X$ consisting of closed sets. Set
\begin{equation*}
P:=\{f\in C\left( X\right) :\sup\nolimits_{x\in V_{n}}\left| f\left( x\right) \right| \leq n\ \forall n\in \mathbb{N}\}.
\end{equation*}%
Then $P$ is a bounded subset of $C_p\left( X\right) $ by  Step 1. Since $\left\{ B_{\alpha }:\alpha \in \mathbb{N}^{\mathbb{N}}\right\}
$ is a fundamental bounded resolution for $C_{p}\left( X\right) $, there exists $\delta \in \mathbb{N}^{\mathbb{N}}$ such that $P\subseteq B_{\delta
} $. To prove the claim we show that $U_{\delta ,n}\subseteq V_n$ for every $n\in\NN$. Take arbitrarily $x\in U_{\delta ,n}$. Then
\[
\sup\nolimits_{f\in P}\left| f\left( x\right) \right| \leq  \sup\nolimits_{f\in B_{\delta }}\left| f\left( x\right) \right| \leq n.
\]
Now  if $x\notin \overline{V_{n}}=V_{n}$, there is $h\in C\left( X\right) $ with $0\leq h\leq n+1$ such that $h\left( x\right) =n+1$ and $h\left( y\right) =0$ for every $y\in V_{n}$. By construction of $h$ and since $\{ V_n\}_n$ is increasing, we have $|h(y)|\leq n$ for every $n\in\NN$ and each $y\in V_n$. Therefore  $h\in P\subseteq B_{\delta }$. So we have at the same time that $x\in U_{\delta ,n}$ and $\left| h\left( x\right) \right| =n+1$ with $h\in B_{\delta }$, a contradiction. Thus $U_{\delta ,n}\subseteq V_{n}$ for every $n\in \mathbb{N}$.

\medskip
{\em Step 3. For every $n\in\NN$ and each $\alpha\in\NN^\NN$, set $M_n(\alpha):= \bigcup_{\beta\in I_n(\alpha)} B_\beta$ and
\[
K_n(\alpha) :=\bigcap_{\beta\in I_n(\alpha)} U_{\beta,n} =\left\{ x\in X: \sup_{f\in M_n(\alpha)} |f(x)| \leq n\right\}.
\]
Then all $K_n(\alpha)$ are closed (but can be empty) and satisfy the following conditions:
\begin{enumerate}
\item[(i)]  $K_n(\alpha) \subseteq K_{n+1}(\alpha)$ for every $n\in\NN$ and each $\alpha\in\NN^\NN$;
\item[(ii)]  $K_n(\alpha) \supseteq K_{n}(\beta)$ for every $n\in\NN$ whenever $\alpha\leq\beta$;
\item[(iii)]  $\bigcup_{n\in\NN} K_n(\alpha)=X$ for each $\alpha\in\NN^\NN$;
\item[(iv)] for every increasing closed covering $\left\{ V_{n}:n\in \mathbb{N}\right\} $ of $X$ there exists $\gamma \in \mathbb{N}^{\mathbb{N}}$ such that $K_n (\gamma)\subseteq V_{n}$ for all $n\in \mathbb{N}$.
\end{enumerate}
Moreover, the family $\KK:=\{ K_n(\alpha) : \; n\in\NN, \alpha\in\NN^\NN\}$ is countable. }

\smallskip
Indeed, (i) and (ii) are clear. To prove (iii) suppose for a contradiction that there is $x\not\in \bigcup_{n\in\NN} K_n(\alpha)$ for some $\alpha\in\NN^\NN$. For every $n\in\NN$ choose $\beta_n\in I_n(\alpha)$ such that $x\not\in U_{\beta_n,n}$. Set $\gamma:= \sup\{ \beta_n :n\in\NN\}$. Then, for every $n\in\NN$, $\beta_n\leq \gamma$ and hence $x\not\in U_{\gamma,n}$ since $U_{\gamma,n} \subseteq U_{\beta_n,n}$ by the definition of a framing family. Therefore $x\not\in \bigcup_{n\in\NN} U_{\gamma,n} =X$, a contradiction. Thus (iii) holds. Now we prove (iv). By the condition of the theorem, there is $\gamma\in \NN^\NN$ such that $U_{\gamma,n} \subseteq V_n$ for all $n\in\NN$. Then $K_n (\gamma)\subseteq V_{n}$ since $K_n (\gamma) \subseteq U_{\gamma,n} $ for all $n\in\NN$.
Finally, the family $\KK$ is countable since, by construction, the set $K_n(\alpha)$ depends only on $\alpha(1),\dots,\alpha(n)$.

\medskip
{\em Step 4. For every $m,n\in\NN$ and each $\alpha\in\NN^\NN$, set
\[
N_{mn}(\alpha) := \left\{ f\in C(X): |f(x)|\leq \frac{1}{m} \;\; \forall  x\in K_n(\alpha) \right\}
\]
(if $K_n(\alpha)$ is empty we set $N_{mn}(\alpha) :=\{ 0\}$).
We claim that the  family
\[
\Nn := \left\{ N_{mn}(\alpha): \; m,n\in\NN \mbox{ and } \alpha\in\NN^\NN \right\}
\]
is a countable $cs^\ast$-network at $0\in C_p(X)$.}

\smallskip
Indeed, the family $\Nn$ is countable since the family $\KK$ is countable. To show that $\Nn$ is a $cs^\ast$-network at $0\in C_p(X)$, let $S=\{ g_n: n\in\NN\}$ be a null-sequence in $C_p(X)$ and  let $U$ be a standard neighborhood of zero in $C_p(X)$ of the form
\[
U=[F,\e]:= \{ f\in C(X): |f(x)|<\e \;\; \forall x\in F \},
\]
where $F$ is a finite subset of $X$ and $\e>0$. Fix arbitrarily an $m\in\NN$ such that $m>1/\e$. For every $n\in\NN$, set
\[
T_{n}:= \bigcap_{i\geq n} R_{i}, \; \mbox{ where } \; R_{i}:= \left\{ x\in X: \; |g_i(x)| \leq \frac{1}{m} \right\}.
\]
It is clear that $\{ T_{n}\}_{n\in\NN}$ is an increasing sequence of closed subsets of $X$. Moreover, since $g_n\to 0$ in $C_p(X)$ we obtain that $\bigcup_{n\in\NN} T_n =X$. Therefore, by (iv), there is a $\gamma\in\NN^\NN$ such that
\begin{equation} \label{equ:Cp-FBR-1}
K_n(\gamma) \subseteq T_n \; \mbox{ for every } n\in\NN.
\end{equation}
Now, by (iii), choose an $n\in\NN$ such that $F\subseteq K_n(\gamma)$. Then (\ref{equ:Cp-FBR-1}) implies
\[
\{ g_i\}_{i\geq n} \subseteq N_{mn}(\gamma) \subseteq U=[F,\e].
\]
Thus  $\Nn$ is a countable $cs^\ast$-network at zero.

\medskip
{\em Step 5. The space $X$ is countable}. Indeed, since the space $C_p(X)$ has a countable $cs^\ast$-network at zero by Step 4, the space $X$ is countable by \cite{Sak} (or \cite{GaK}, recall that $C_p(X)$ is $b$-Baire-like for every Tychonoff space $X$).
\end{proof}

\begin{example} \label{exa:Cp-no-brs} {\em
Let $X$ be an uncountable pseudocompact space. Then the space $C_p(X)$ has a bounded resolution but it does not have a fundamental bounded resolution.
Indeed, as $X$ is pseudocompact,  the sets
$
B_n=\{ f\in C(X): |f(x)|\leq n \; \forall x\in X\}
$
form a bounded resolution (even a bounded sequence) in $C_p(X)$. The second assertion follows from Theorem \ref{t:FBR-Cp}.}
\end{example}

For Lindel\"{o}f $P$-spaces  we obtain even a stronger result.
\begin{proposition} \label{p:Cp-bounded-res-P-space}
Let $X$ be a Lindel\"{o}f $P$-space. Then $C_p(X)$ has a bounded resolution if and only if $X$ is countable and discrete.
\end{proposition}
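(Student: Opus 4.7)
The backward direction will follow immediately from Corollary~\ref{c:metrizable-fbr}: for $X$ countable and discrete, $C_p(X)=\mathbb{R}^X\cong\mathbb{R}^\omega$ is metrizable, so it admits a fundamental bounded resolution and in particular a bounded resolution. For the forward direction, assume $X$ is a Lindel\"of $P$-space and $\{A_\alpha:\alpha\in\NN^\NN\}$ is a bounded resolution in $C_p(X)$; I shall show that $X$ is countable, and then discreteness will follow automatically, because in a countable $T_1$ $P$-space the complement of each singleton is a countable union of closed singletons, hence an $F_\sigma$-set, which is closed in a $P$-space (its complement being an open $G_\delta$), so every singleton is open.

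My plan is to adapt the proof of Theorem~\ref{t:FBR-Cp}, using the $P$-space structure of $X$ to replace the fundamentality hypothesis. First I would form the sets $U_{\alpha,n}$, $M_n(\alpha):=\bigcup_{\beta\in I_n(\alpha)}A_\beta$, and the countable family $\KK=\{K_n(\alpha):n\in\NN,\ \alpha\in\NN^\NN\}$ exactly as in Steps 1--3 there; the arguments establishing properties (i)--(iii) of Step 3 do not invoke fundamentality. The crucial input from the $P$-space hypothesis is that, for every $f\in C(X)$ and every $n$, the preimage $f^{-1}([-n,n])$ is a closed $G_\delta$-set, hence open in the $P$-space $X$, and so clopen. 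Consequently the sets $T_n:=\bigcap_{i\ge n}\{x\in X:|g_i(x)|\le 1/m\}$ that arise in the $cs^*$-network verification will be clopen, being countable intersections of clopen sets in a $P$-space.

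Next I plan to carry out Step 4 of the proof of Theorem~\ref{t:FBR-Cp}: verifying that $\Nn=\{N_{m,n}(\alpha)\}$ is a countable $cs^*$-network at $0$ in $C_p(X)$. Once this is done, Sakai's theorem~\cite{Sak} (invoked in Step 5 of Theorem~\ref{t:FBR-Cp}) forces $X$ to be countable. Given a null sequence $\{g_k\}$ in $C_p(X)$ and a basic neighbourhood $U=[F,\e]$ of $0$, I pick $m>1/\e$ and, for each $j\in\NN$, consider the metrizable compact subset $B_j:=\{jg_k:k\in\NN\}\cup\{0\}$ of $C_p(X)$. Replacing every $A_\alpha$ by its pointwise-closed absolutely convex hull (still bounded and still covering $C_p(X)$), a Valdivia-type swallowing argument in the spirit of \cite[Theorem~3.5]{kak} should yield $\gamma(j)\in\NN^\NN$ with $B_j\subseteq A_{\gamma(j)}$. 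Then by property~(iii) of $\KK$ I choose $n$ with $F\subseteq K_n(\gamma(nm))$; since $A_{\gamma(nm)}\subseteq M_n(\gamma(nm))$, the inequality $\sup_{f\in M_n(\gamma(nm))}|f(x)|\le n$ on $K_n(\gamma(nm))$ yields $|nm\cdot g_k(x)|\le n$, i.e.\ $|g_k(x)|\le 1/m<\e$, for all $k$ and all $x\in K_n(\gamma(nm))$. Thus $\{g_k\}_k\subseteq N_{m,n}(\gamma(nm))\subseteq[F,\e]$, as required.

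The main obstacle will be the swallowing step $B_j\subseteq A_{\gamma(j)}$: this is a restricted form of fundamentality for the specific countable compacts $B_j$. The hard part will be verifying it carefully, using that by the $P$-space property of $X$ pointwise sequential limits of continuous functions are continuous (the countable intersection of open neighbourhoods needed to verify continuity of such a limit is open by the $P$-space property), so $C_p(X)$ is sequentially closed in $\mathbb{R}^X$ and the sets $B_j$ are closed there; together with the $\NN^\NN$-increasing closed structure of the modified resolution, this should deliver the required $\gamma(j)$.
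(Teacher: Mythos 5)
Your backward direction and the observation that a countable Tychonoff $P$-space is discrete are fine, and your overall strategy is genuinely different from the paper's. The problem is that the pivotal step --- swallowing the compact sets $B_j$ by the (hulled) resolution --- is exactly the part you leave unproved, and the justification you sketch does not suffice. Nothing in the definition of a bounded resolution forces it to swallow convergent sequences: if $x_k\in A_{\alpha_k}$, the countably many indices $\alpha_k$ need not admit a common upper bound in $\NN^\NN$ (take $\alpha_k=(k,k,\dots)$), and closedness of $B_j$ in $\IR^X$ together with closedness of the hulls does not repair this. What does repair it is precisely the ingredient you half-invoke: since $X$ is a $P$-space, $C_p(X)$ is sequentially, hence locally, complete, so the closed absolutely convex hulls of the $A_\alpha$ are Banach discs and Valdivia's theorem \cite[Theorem~3.5]{kak} produces a \emph{new} resolution swallowing all Banach discs; that is, implication (iv)$\Rightarrow$(i) of Theorem \ref{t:G-base-strong} upgrades your bounded resolution to a \emph{fundamental} one outright. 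But at that point Theorem \ref{t:FBR-Cp} applies verbatim and your entire reconstruction of Steps 1--4 is unnecessary. There is also a circularity in your final step as written: you choose $n$ so that $F\subseteq K_n(\gamma(nm))$, but $\gamma(nm)$ itself depends on $n$, and since no single $A_\gamma$ can contain $\bigcup_j B_j$ (that union is unbounded pointwise), the dependence cannot be removed by picking one index for all $j$.

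For comparison, the paper's own proof avoids Theorem \ref{t:FBR-Cp} entirely: sequential completeness of $C_p(X)$ (from \cite{FKS}, using the $P$-space hypothesis) together with the Fr\'echet--Urysohn property of $C_p(X)$ for Lindel\"of $P$-spaces \cite{Arhangel} makes $C_p(X)$ a Baire space, and a Baire topological vector space with a bounded resolution is metrizable, whence $X$ is countable. If you complete your argument honestly via local completeness and Theorem \ref{t:G-base-strong}, you obtain a two-line proof of the forward implication (and, as a bonus, one that does not use Lindel\"ofness); but as submitted, the proposal has a genuine gap at its central step.
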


\begin{proof}
Assume that $C_p(X)$ has a bounded resolution. Then $C_p(X)$ is  sequentially complete (equivalently locally complete) by \cite{FKS}. As $X$ is a Lindel\"{o}f $P$-space, the space $C_p(X)$ is Frech\'{e}t--Urysohn by Theorem~II.7.15 of \cite{Arhangel}. By Theorem 14.1 of \cite{kak}, every sequentially complete  Frech\'{e}t--Urysohn lcs is Baire, so $C_p(X)$ is a Baire space.  But any Baire topological vector space  with a bounded resolution is metrizable by Proposition 7.1 of \cite{kak}. Therefore $C_p(X)$ is metrizable. Thus $X$ is countable. The converse assertion follows from Corollary \ref{c:metrizable-fbr} and the trivial fact that every countable $P$-space is discrete.
\end{proof}

Recall (see \cite{Mich}) that a Tychonoff space $X$ is called a {\em cosmic space} (an {\em $\aleph_{0}$-space}) if $X$ is an  image of a separable metric space under a continuous (respectively, compact-covering) map.

\begin{theorem} \label{t:strong-dual-G-cosmic}
Let  $E=\CC(\CC(X))$. Then:
\begin{enumerate}
\item[{\rm (i)}] If $X$ is metrizable, then $E$ has  a $\GG$-base if and only if $X$ is $\sigma$-compact. In this case $E$ is barrelled.
\item[{\rm (ii)}] If $X$ is an  $\aleph_0$-space, then the strong dual $E'_\beta$ of $E$ has a $\GG$-base if and only if $X$ is finite. In particular, if $X$ is infinite, then $E$ is not a regular $(LM)$-space.
\item[{\rm (iii)}] If $X$ is a $\mu$-space and $E'_\beta$ has a $\GG$-base, then $X$ has a  fundamental compact resolution, so that $\CC(X)$ has a $\GG$-base. But the converse is not true in general.
\end{enumerate}
Consequently, if $X$ is an infinite metrizable  $\sigma$-compact space, then $E$ is a barrelled space with a $\GG$-base whose strong dual $E'_\beta$ does not have a $\GG$-base.
\end{theorem}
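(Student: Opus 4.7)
The plan rests on two tools already set out in the paper: the Ferrando--Ka\c{k}ol characterization \cite{feka} that $C_k(Y)$ has a $\GG$-base iff $Y$ has a fundamental compact resolution, and Theorem \ref{t:G-base-strong}, which interchanges ``$E$ has a fundamental bounded resolution'' with ``$E'_\beta$ has a $\GG$-base''. Throughout I set $Y:=C_k(X)$, so $E=C_k(Y)$, and derive all three parts from these two ingredients.

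For (i), \cite{feka} reduces the stated equivalence to: $Y$ has a fundamental compact resolution iff $X$ is $\sigma$-compact. Forward, a fundamental compact resolution on $Y$ is in particular a bounded resolution, and since $X$ metrizable makes $Y$ locally complete, Corollary \ref{c:Ck-strong-dual-G} yields $X$ $\sigma$-compact. Backward, Corollary~2.10 of \cite{GK-Free-LCS-G}, already invoked in the proof of Corollary \ref{c:Ck-strong-dual-G}, produces the fundamental compact resolution on $Y$ directly. For the barrelledness of $E$, a fundamental compact resolution makes $Y$ $K$-analytic, hence Lindel\"of, hence realcompact, hence a $\mu$-space, and the Nachbin--Shirota theorem then delivers $E=C_k(Y)$ barrelled.

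The natural approach to (iii) is the evaluation embedding $\delta\colon X\hookrightarrow E$, $\delta_x(f):=f(x)$. Theorem \ref{t:G-base-strong} turns the hypothesis into a fundamental bounded resolution $\{B_\alpha\}$ of $E$; I would set $K_\alpha:=\overline{\delta^{-1}(B_\alpha)}$. For any $f\in C(X)$ the singleton $\{f\}$ is compact in $Y$, so $\sup_{\varphi\in B_\alpha}|\varphi(f)|<\infty$; hence $\delta^{-1}(B_\alpha)$ is functionally bounded in $X$, and by the $\mu$-space hypothesis $K_\alpha$ is compact. The family is $\NN^\NN$-increasing and covers $X$ (each $\delta_x\in E$ lies in some $B_\alpha$), and it swallows compact sets of $X$: for $K\subseteq X$ compact, $\delta(K)$ is compact hence bounded in $E$ and so contained in some $B_\alpha$, giving $K\subseteq K_\alpha$. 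Thus $\{K_\alpha\}$ is a fundamental compact resolution, and \cite{feka} then yields the $\GG$-base on $C_k(X)$. For the non-converse, I take $X=\RR$: it is Polish so has a fundamental compact resolution, yet $C_k(\RR)$ is metrizable, infinite-dimensional, Fr\'echet--Montel, hence not $\sigma$-compact, so Corollary \ref{c:Ck-strong-dual-G} (with $C_k(\RR)$ as base space) denies $E$ a fundamental bounded resolution, and Theorem \ref{t:G-base-strong} denies $E'_\beta$ a $\GG$-base.

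For (ii), the backward direction is short: $X$ finite gives $E\cong C_k(\RR^n)$ Fr\'echet--Montel, so $E$ is reflexive with $E''_\beta=E$, which has a fundamental bounded resolution by Corollary \ref{c:metrizable-fbr}; Theorem \ref{t:G-base-strong} applied to $E'_\beta$ supplies the $\GG$-base. The forward direction is the delicate one, and is where I foresee the main obstacle. An $\aleph_0$-space is Lindel\"of, hence a $\mu$-space, so (iii) yields both a fundamental compact resolution on $X$ and a $\GG$-base on $Y$; by Michael's theorem $Y$ is also $\aleph_0$, in particular cosmic, and at this point the argument requires the external fact that a cosmic lcs with a $\GG$-base is metrizable (this is the step that does not come out of the machinery developed above). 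Granting it, $X$ is hemicompact, and since every $\aleph_0$-space is a $k$-space, $Y$ is Fr\'echet; combined with the fundamental bounded resolution on $E$ (Theorem \ref{t:G-base-strong} again), Corollary \ref{c:Ck-strong-dual-G} applied with $Y$ as base space forces $Y$ to be $\sigma$-compact as a topological space, and Baire category makes any $\sigma$-compact Fr\'echet lcs finite-dimensional, so $X$ is finite. The closing consequence is then immediate: a metrizable $\sigma$-compact $X$ is separable (Lindel\"of metric is second countable), hence an $\aleph_0$-space; (i) makes $E$ barrelled with a $\GG$-base, and (ii) precludes $E'_\beta$ from having a $\GG$-base whenever $X$ is infinite.
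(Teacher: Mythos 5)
Your part (iii) is correct and is a genuinely different, much shorter argument than the paper's: instead of working in the dual $M_c(X)$ with quotient topologies and Corollary~2.3 of \cite{FGK}, you pull the fundamental bounded resolution $\{B_\alpha\}$ of $E$ back along the evaluation map $\delta$, and every step checks out --- $\delta^{-1}(B_\alpha)$ is functionally bounded because evaluation at the compact singleton $\{f\}\subseteq\CC(X)$ is a continuous seminorm on $E$, and $\delta(K)$ is bounded in $E$ for compact $K\subseteq X$ because $\sup_{f\in Q}\sup_{x\in K}|f(x)|<\infty$ for every compact $Q\subseteq\CC(X)$, with no continuity of $\delta$ actually needed. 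Parts (i) and the backward half of (ii) also reach the right conclusions by essentially the paper's route, modulo two factual slips: a (fundamental) compact resolution does \emph{not} make a space $K$-analytic or Lindel\"of in general (a discrete space of cardinality $\aleph_1$ under $\aleph_1<\mathfrak{b}$ has one), so in (i) you should get that $\CC(X)$ is a $\mu$-space from Michael's theorem ($X$ metrizable $\sigma$-compact is an $\aleph_0$-space, hence $\CC(X)$ is Lindel\"of), as the paper does; and $\CC(\RR^n)$ is \emph{not} Montel --- what you actually need is only that a Fr\'echet space has a fundamental bounded resolution (Corollary \ref{c:metrizable-fbr}) and that an infinite-dimensional Fr\'echet space is Baire, hence not $\sigma$-compact.

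The genuine gap is in the forward direction of (ii), exactly at the step you flag. The ``external fact'' that a cosmic locally convex space with a $\GG$-base is metrizable is false: the countable direct sum $\varphi=\bigoplus_{n\in\NN}\RR$ is $\sigma$-compact (a countable union of the closed copies of $\RR^n$), hence cosmic, and it has a $\GG$-base --- for instance the sets $U_\alpha=\bigl\{x:\sum_{n}\alpha(n)|x_n|\le 1\bigr\}$, or simply because it is an $(LM)$-space --- yet $\varphi$ is not metrizable. So the chain ``$Y=\CC(X)$ cosmic with a $\GG$-base $\Rightarrow Y$ metrizable $\Rightarrow X$ hemicompact $\Rightarrow$ apply Corollary \ref{c:Ck-strong-dual-G} to $Y$'' collapses at its first link. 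The paper never needs metrizability of $Y$: from Theorem \ref{t:G-base-strong} it gets a bounded resolution on $\CC(\CC(X))$, hence on $C_p(\CC(X))$, and then invokes Corollary~9.1 of \cite{kak} (an Arhangel'skii--Calbrix-type result: a cosmic space $Y$ for which $C_p(Y)$ has a bounded resolution is $\sigma$-compact) to conclude that $\CC(X)$, and therefore its continuous image $C_p(X)$, is $\sigma$-compact, finishing with Velichko's theorem. That result is the missing ingredient; without it, or some substitute, your proof of (ii) --- and with it the closing ``Consequently'' statement --- does not go through.
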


\begin{proof}
(i) If $E$ has a $\GG$-base, then $\CC(X)$ has a fundamental  compact resolution by Theorem 2 of \cite{feka}. Therefore $X$ is $\sigma$-compact by Corollary 9.2 of \cite{kak}. Conversely, if $X$ is $\sigma$-compact, then $\CC(X)$ has a  fundamental compact resolution by Corollary 2.10 of \cite{GK-Free-LCS-G}. Once again applying Theorem 2 of \cite{feka}, we obtain that $E$ has a $\GG$-base.

To prove the last assertion we note that any metrizable $\sigma$-compact space is an $\aleph_0$-space, and hence $\CC(X)$ is Lindel\"{o}f by \cite{Mich}. So $\CC(X)$ is a $\mu$-space and the space $E$ is barrelled by the Nachbin--Shirota theorem.

(ii) Assume that the strong dual of $\CC(\CC(X))$ has a $\GG$-base. Then, by Theorem \ref{t:G-base-strong}, the space $\CC(\CC(X))$ and hence also $C_p(\CC(X))$ have a bounded resolution. Since $\CC(X)$ is also cosmic by Proposition 10.3 of \cite{Mich}, Corollary 9.1 of \cite{kak} implies that the space $\CC(X)$ is $\sigma$-compact. Therefore $C_p(X)$ is also $\sigma$-compact. Now Velichko's theorem \cite[Theorem~9.12]{kak} implies that $X$ is finite. Conversely, if $X$ is finite and $|X|=n$, then $E=\CC(\IR^n)$ is a Fr\'{e}chet space and Theorem \ref{t:G-base-strong} applies.

The last assertion follows from this result and Proposition \ref{p:dual-regular-LM}.

(iii) Let $M_c(X)$ be the topological dual of $\CC(X)$. Denote by $\TTT_k$ and $\TTT_\beta$ the compact-open topology induced from $E$ and the strong topology $\beta(M_c(X),C(X))$ on $M_c(X)$, respectively. Clearly, $\TTT_k \leq \TTT_\beta$. Set $G:= (M_c(X),\TTT_k)'$ and  $F:= (M_c(X),\TTT_\beta)'$. Then $C(X) \subseteq G\subseteq F$, algebraically.  Hence every $\sigma(M_c(X),F)$-bounded subset of $M_c(X)$ is also $\sigma(M_c(X),G)$-bounded, and therefore
\begin{equation} \label{equ:strong-dual-G-1}
\beta\big(F,M_c(X)\big)|_G \leq \beta\big(G,M_c(X)\big).
\end{equation}

Observe that $\CC(X)$ is barrelled by the Nachbin--Shirota theorem, and hence, by Theorem 15.2.3 of \cite{NaB}, the space $\CC(X)$ is a subspace of its strong bidual space $\big( F, \beta\big(F,M_c(X)\big)\big)$. Therefore
\begin{equation} \label{equ:strong-dual-G-2}
\tau_k =\beta\big(F,M_c(X)\big)|_{C(X)}.
\end{equation}

As $(M_c(X),\TTT_k)$ is a closed subspace of $E$ we obtain $G= (M_c(X),\TTT_k)'= E'/M_c(X)^\perp$, algebraically. Denote by $\TTT_q$ the quotient topology of the strong dual $E'_\beta$ of $E$ on $G$. We claim that the strong topology $\beta(G,M_c(X))$ on $G$ is coarser than $\TTT_q$. Indeed, if $j$ is the canonical inclusion of $(M_c(X),\TTT_k)$ into $E$, the adjoint map $j^\ast$ is strongly continuous, see \cite[Theorem~8.11.3]{NaB}. Then the claim follows from the fact that $j^\ast$ is raised to the continuous map from the quotient $E'_\beta/M_c(X)^\perp$ to the strong dual of $(M_c(X),\TTT_k)$. Now the claim and (\ref{equ:strong-dual-G-1}) and (\ref{equ:strong-dual-G-2}) imply
\begin{equation} \label{equ:strong-dual-G-3}
\tau_k =\beta\big(F,M_c(X)\big)|_{C(X)}  \leq \beta\big(G,M_c(X)\big)|_{C(X)} \leq \TTT_q|_{C(X)}.
\end{equation}

Suppose for a contradiction that $E'_\beta$ has a $\GG$-base. Then the quotient topology $\TTT_q$ and hence $\TTT_q|_{C(X)}$ also have a $\GG$-base. 
Therefore, by (\ref{equ:strong-dual-G-3}), there exists a locally convex topology $\TTT:=\TTT_q|_{C(X)}$ with a $\GG$-base on $C(X)$ such that $\tau_k\leq \TTT$. According to \cite[Corollary~2.3]{FGK} applied to the family $\mathcal{S}=\mathcal{K}(X)$ of all compact subsets of $X$ and $C(X)$, 
we obtain that $X$ has a functionally bounded resolution swallowing the compact sets of $X$. Finally, since $X$ is a $\mu$-space, it follows that $X$ admits a  fundamental compact resolution.

Observe that for  $X=\IR$ the space  $X$ is even hemicompact, but $E'_\beta$ does not have a $\GG$-base by (ii).
\end{proof}


Following Markov \cite{Mar}, the {\em  free lcs} $L(X)$ over a Tychonoff space $X$ is a pair consisting of a lcs $L(X)$ and  a continuous mapping $i: X\to L(X)$ such that every  continuous mapping $f$ from $X$ to a lcs $E$ gives rise to a unique continuous linear operator ${\bar f}: L(X) \to E$  with $f={\bar f} \circ i$. The free lcs $L(X)$  always exists and is  unique.
The set $X$ forms a Hamel basis for $L(X)$, and  the mapping $i$ is a topological embedding. 
Denote by $L_p(X)$ the free lcs $L(X)$ endowed with the weak topology.

Following \cite{BG}, a Tychonoff space $X$  is called an {\em Ascoli space} if every compact subset $\KK$ of $\CC(X)$  is equicontinuous (see \cite{Gabr-LCS-Ascoli}). By Ascoli's theorem \cite{NaB}, each $k$-space is Ascoli. The following theorem complements and extends Theorem 3.2 and Corollaries 3.3 and 3.4 of \cite{GK-Free-LCS-G}.

\begin{theorem} \label{t:Ascoli-free-G-base}
For an Ascoli space $X$ the following assertions are equivalent:
\begin{enumerate}
\item[{\rm (i)}] $L(X)$ has a $\GG$-base;
\item[{\rm (ii)}] $\CC(X)$ has  a  fundamental compact  resolution;
\item[{\rm (iii)}] $\CC\big(\CC(X)\big)$ has a $\GG$-base.
\end{enumerate}
In particular, any  item above implies that  every compact subset of $X$ is metrizable.
\end{theorem}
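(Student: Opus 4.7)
My plan is to split the proof into three pieces: (ii)$\Leftrightarrow$(iii) from a previously known duality, (i)$\Leftrightarrow$(ii) using the Ascoli hypothesis through Ascoli--Arzel\`a, and the ``in particular'' statement by appeal to Cascales--Orihuela.

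The equivalence (ii)$\Leftrightarrow$(iii) is a direct application of Theorem 2 of \cite{feka}, which asserts that for every Tychonoff space $Y$, the space $\CC(Y)$ has a $\GG$-base if and only if $Y$ admits a fundamental compact resolution; take $Y:=\CC(X)$. No assumption on $X$ is needed at this stage, and both directions are already packaged in the cited result.

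For (i)$\Leftrightarrow$(ii), I would exploit the canonical duality $L(X)'=C(X)$ together with the following dictionary, valid because $X$ is Ascoli: the polars (inside $C(X)$) of zero-neighborhoods of $L(X)$ are precisely the absolutely convex, weak$^\ast$-closed compact subsets of $\CC(X)$. Indeed, polars of zero-neighborhoods of any lcs are by definition the (absolutely convex, weak$^\ast$-closed) equicontinuous subsets of the dual, and for $L(X)$ these are in turn the pointwise-bounded equicontinuous families of continuous functions on $X$ (standard theory of the free lcs, cf.\ \cite{GK-Free-LCS-G}); under the Ascoli hypothesis, Ascoli--Arzel\`a identifies such families with the relatively compact subsets of $\CC(X)$, and the coincidence of the weak$^\ast$ and compact-open topologies on equicontinuous sets upgrades ``relatively compact'' to ``compact''. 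Given this dictionary, both directions are formal. For (ii)$\Rightarrow$(i), take a fundamental compact resolution $\{K_\alpha:\alpha\in\NN^\NN\}$ of $\CC(X)$ (absolutely convex by passing to closed absolutely convex hulls) and set $V_\alpha:=K_\alpha^\circ$ in $L(X)$: each $V_\alpha$ is a zero-neighborhood by the dictionary, the family is $\NN^\NN$-decreasing by bipolarity, and it is a base at $0$ because any zero-neighborhood contains the polar of a compact subset of $\CC(X)$, which by the swallowing property is contained in some $K_\alpha$. For (i)$\Rightarrow$(ii), take a $\GG$-base $\{U_\alpha\}$ of $L(X)$ and let $K_\alpha:=U_\alpha^\circ\subseteq C(X)$; the dictionary makes each $K_\alpha$ compact in $\CC(X)$, and the swallowing of arbitrary compact $K\subseteq\CC(X)$ by $\{K_\alpha\}$ is obtained by applying the dictionary in the other direction: $K^\circ$ is a zero-neighborhood in $L(X)$, so $U_\alpha\subseteq K^\circ$ for some $\alpha$ and $K\subseteq K_\alpha$ by bipolarity.

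I expect the main obstacle to be the careful setup of this dictionary, in particular verifying rigorously that ``equicontinuous subset of $L(X)'$ as functionals on $L(X)$'' really coincides with ``pointwise-bounded equicontinuous family of continuous functions on $X$''; the Ascoli hypothesis on $X$ is precisely what is needed to further translate this into compactness in $\CC(X)$. Once this is in place, the ``in particular'' statement is an immediate consequence of (i) combined with the Cascales--Orihuela theorem: any lcs possessing a $\GG$-base belongs to the class $\mathfrak{G}$, so every precompact subset of $L(X)$ is metrizable, and since $X$ embeds as a topological subspace of $L(X)$, every compact subset of $X$ must be metrizable.
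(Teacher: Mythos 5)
Your proposal is correct, and it rests on the same polar duality between zero-neighbourhoods of $L(X)$ and compact subsets of $\CC(X)$ that the paper exploits; the differences are in the decomposition and in which form of that duality is invoked. The paper proves the cycle (i)$\Rightarrow$(ii)$\Rightarrow$(iii)$\Rightarrow$(i): for (i)$\Rightarrow$(ii) it verifies by hand that the restrictions of the polars $U_\alpha^\circ$ to $X$ are pointwise bounded and equicontinuous (hence compact by Ascoli's theorem), and for the swallowing property --- as well as for (iii)$\Rightarrow$(i) --- it uses the fact that for an Ascoli space $X$ the free space $L(X)$ embeds topologically into $\CC\big(\CC(X)\big)$ (Theorem 1.2 of \cite{Gabr-LCS-Ascoli}). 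You instead prove the two biconditionals (ii)$\Leftrightarrow$(iii) and (i)$\Leftrightarrow$(ii) separately, and you replace the embedding theorem by the ``dictionary'' identifying equicontinuous subsets of $L(X)'$ with pointwise bounded, equicontinuous families on $X$. You correctly flag that identification as the crux: the nontrivial half --- that the polar of a pointwise bounded equicontinuous family is a zero-neighbourhood of $L(X)$ --- is Uspenskii's description of the free locally convex topology, and for Ascoli $X$ it is equivalent to the embedding $L(X)\hookrightarrow\CC\big(\CC(X)\big)$ that the paper cites, so your dictionary is available from the same source and your two polar computations (bipolar theorem plus the swallowing/base property) then go through exactly as you describe. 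One further small difference: your (ii)$\Leftrightarrow$(iii) uses both directions of Theorem 2 of \cite{feka} applied to $Y=\CC(X)$, whereas the paper needs only (ii)$\Rightarrow$(iii) from that theorem because it closes the cycle through the embedding; both uses are legitimate since the cited result is an equivalence. The ``in particular'' statement is handled identically in both arguments.
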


\begin{proof}
(i)$\Rightarrow$(ii) Let $\U=\{ U_\alpha: \alpha\in\NN^\NN\}$ be a $\GG$-base in $L(X)$. Then the family $\U^\circ=\{ U_\alpha^\circ: \alpha\in\NN^\NN\}$, where the polars are taken in the dual space $L(X)'$ of $L(X)$, is a compact resolution of $\big( L(X)', \sigma(L(X)',L(X))\big)$.  It is well known and easy to see that the dual space $L(X)'$ of $L(X)$ can be identified with the space $C(X)$ under the restriction map (recall that $X$ is a Hamel base for $L(X)$) 
\[
L(X)' \ni \chi \mapsto \chi|_X \in C(X).
\]
It is clear that the weak* topology  $\sigma(L(X)',L(X))$ on $L(X)'$ induces the pointwise topology $\tau_p$ on $C(X)$. Therefore, for every $\alpha\in\NN^\NN$, the set $K_\alpha:=\{ \chi|_X: \chi\in U_\alpha^\circ \}$ is closed also in the compact-open topology $\tau_k$ on $C(X)$. We claim that $K_\alpha$ is a compact subset of $\CC(X)$. For this, by the Ascoli theorem \cite[Theorem~5.10.4]{NaB}, it is sufficient to check that $K_\alpha$ is pointwise bounded and equicontinuous. Clearly, $K_\alpha$ is pointwise bounded. To show that  $K_\alpha$ is equicontinuous fix an $x\in X$. Then for every $y=x+t \in (x+U_\alpha)\cap X$ (recall that $X$ is a subspace of $L(X)$), we obtain
\[
\big| \chi|_X (y) - \chi|_X(x)\big| = |\chi(t)|\leq 1, \quad \forall \, \chi|_X\in K_\alpha.
\]
Thus $K_\alpha$ is a compact subset of $\CC(X)$. Consequently, the family $\KK:=\{ K_\alpha: \alpha\in\NN^\NN\}$ is a compact resolution in $\CC(X)$.

Take arbitrarily a compact subset $K$ of $\CC(X)$. Then the polar $K^\circ$ of $K$ in $\CC(\CC(X))$ is a neighborhood of zero in $\CC(\CC(X))$. Since $X$ is Ascoli, the space $L(X)$ is a subspace of $\CC(\CC(X))$ by Theorem 1.2 of \cite{Gabr-LCS-Ascoli}. So there is $\alpha\in\NN^\NN$ such that $U_\alpha \subset K^\circ \cap L(X)$. Hence $K \subseteq K^{\circ\circ} \subseteq U_\alpha^\circ$. Thus $\KK$ swallows the compact sets of $\CC(X)$.

(ii)$\Rightarrow$(iii) follows from Theorem 2 of \cite{feka}, and (iii) implies (i) since $L(X)$ is a subspace of $\CC(\CC(X))$ by Theorem 1.2 of \cite{Gabr-LCS-Ascoli}.

The last assertion follows from the fact that $X$ is a subspace of $L(X)$ and Cascales--Orihuela's theorem \cite[Theorem 11]{CasOr} (which states that every compact subset of a lcs  with a $\GG$-base is metrizable).
\end{proof}
The previous theorem may suggest the following problem: Characterize in terms of $X$ those spaces $C_{k}(X)$ with a fundamental bounded resolution.


We propose also the following

\begin{proposition} \label{p:strong-dual-Cp-fund-res}
The following assertions are equivalent:
\begin{enumerate}
\item [(i)]  The strong dual space $L(X)_\beta$ of $C_p(X)$ has  a bounded resolution.
\item [(ii)] $L(X)_\beta$ has  a fundamental bounded resolution.
\item [(iii)]   $X$ is countable.
\end{enumerate}
\end{proposition}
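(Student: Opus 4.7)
For the direction (iii)$\Rightarrow$(ii), if $X$ is countable then $C_p(X)$ is a closed subspace of the Fr\'echet space $\IR^X$ and hence metrizable; Grothendieck's classical theorem (\cite[Proposition~8.3.7]{bonet}) then makes $L(X)_\beta$ a $(DF)$-space, which admits a fundamental sequence $\{B_n\}_{n\in\NN}$ of bounded sets, and putting $B_\alpha:=B_{\alpha(1)}$ yields a fundamental bounded resolution. The implication (ii)$\Rightarrow$(i) is immediate.

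For (i)$\Rightarrow$(iii), the plan is to reduce to Theorem~\ref{t:FBR-Cp} by a dualization of its proof. Let $\{D_\alpha : \alpha\in\NN^\NN\}$ be a bounded resolution of $L(X)_\beta$. The strong dual of any lcs is locally complete (the closed absolutely convex hull of each $\beta$-bounded set is a Banach disc, by Proposition~5.1.6 of \cite{bonet}), so the implication (iv)$\Rightarrow$(i) of Theorem~\ref{t:G-base-strong} upgrades $\{D_\alpha\}$ to a fundamental bounded resolution; replacing each $D_\alpha$ by its closed absolutely convex hull, one may further assume each $D_\alpha$ is absolutely convex.

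Next, for $\alpha\in\NN^\NN$ and $n\in\NN$, set
\[
U_{\alpha,n}:=\bigl\{x\in X : \sup_{f\in D_\alpha^\circ}|f(x)|\le n\bigr\},
\]
where $D_\alpha^\circ=\{f\in C(X):|\mu(f)|\le 1\ \forall\mu\in D_\alpha\}$ is the polar of $D_\alpha$ in $C_p(X)$. After adjusting the $D_\alpha$'s via their weak$^*$-bipolars (to ensure pointwise boundedness of $D_\alpha^\circ$ at each point of $X$), one checks that $\{U_{\alpha,n}\}$ is a framing family on $X$ in the sense of the proof of Theorem~\ref{t:FBR-Cp}. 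The key swallowing step, the analogue of Step~2 in that proof, is dualized as follows: given a closed increasing covering $\{V_n\}$ of $X$, the set $P:=\{f\in C(X):\sup_{V_n}|f|\le n\ \forall n\}$ is pointwise bounded and hence bounded in $C_p(X)$, so its polar $P^\circ\subseteq L(X)$ is a neighborhood of zero in $L(X)_\beta$; because $\{D_\alpha\}$ is a fundamental bounded resolution, a bipolar/gauge argument extracts $\delta\in\NN^\NN$ with $P\subseteq D_\delta^\circ$, and then the Urysohn-function construction of the original Step~2 (if $x\notin V_n$, build $h\in P$ with $h(x)=n+1$ and derive a contradiction from $h\in D_\delta^\circ$) forces $U_{\delta,n}\subseteq V_n$ for all $n$.

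With the framing family and swallowing step in hand, Steps~3--5 of the proof of Theorem~\ref{t:FBR-Cp} go through essentially verbatim: one constructs the closed sets $K_n(\alpha):=\bigcap_{\beta\in I_n(\alpha)}U_{\beta,n}$ and then the countable family $\{N_{mn}(\alpha):m,n\in\NN,\alpha\in\NN^\NN\}$ which forms a countable $cs^\ast$-network at $0\in C_p(X)$, whence Sakai's theorem \cite{Sak} (as cited in Step~5) forces $X$ to be countable
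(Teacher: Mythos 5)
Your implications (iii)$\Rightarrow$(ii) and (ii)$\Rightarrow$(i) are correct, but the core implication (i)$\Rightarrow$(iii) has a genuine gap: the proposed dualization of the proof of Theorem \ref{t:FBR-Cp} cannot work, because the polars $D_\alpha^\circ$ play the role of \emph{neighbourhoods of zero} (in the strong bidual of $C_p(X)$), not of bounded sets, whereas Step 2 of that proof essentially uses that the sets over which one takes suprema are pointwise bounded. Concretely, since $C_p(X)$ is quasibarrelled its strong dual $L(X)_\beta$ is \emph{feral} (every $\beta$-bounded subset of $L(X)$ is finite-dimensional, see \cite{FKS-feral}); hence each $D_\alpha$ lies in $\spn\{\delta_x:x\in S_\alpha\}$ for some finite $S_\alpha\subseteq X$, and $D_\alpha^\circ$ contains every $f\in C(X)$ vanishing on $S_\alpha$. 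By complete regularity $\sup_{f\in D_\alpha^\circ}|f(x)|=\infty$ for every $x\notin S_\alpha$, so $U_{\alpha,n}\subseteq S_\alpha$ and $\bigcup_{n}U_{\alpha,n}\neq X$ whenever $X$ is infinite: your family is not framing, and Steps 3--5 cannot be run. No ``adjustment via bipolars'' repairs this, since the bipolar of a finite-dimensional set stays inside its span. The swallowing step is also reversed: $P\subseteq D_\delta^\circ$ is equivalent to $D_\delta\subseteq P^\circ$, i.e.\ to a member of an increasing covering family being contained in a fixed neighbourhood of zero of $L(X)_\beta$; this is not what ``fundamental'' provides (which is that every \emph{bounded} set is contained in some $D_\delta$), and it fails for all sufficiently large $\delta$. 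Finally, the blanket claim that the strong dual of an arbitrary lcs is locally complete is not justified (the paper explicitly has to \emph{assume} dual local completeness where it needs the analogous weak$^*$ statement), although in the present situation it is harmless because feralness makes local completeness of $L(X)_\beta$ trivial.

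The intended argument for (i)$\Rightarrow$(iii) is much shorter and does not pass through Theorem \ref{t:FBR-Cp} at all. It uses feralness directly: the bounded resolution $\{D_\alpha:\alpha\in\NN^\NN\}$ covers $L(X)$, hence covers $X\subseteq L(X)$; if $X$ were uncountable, then by \cite[Proposition~3.7]{kak} some $D_\alpha$ would contain infinitely many points of $X$, and since $X$ is a Hamel basis of $L(X)$ that $D_\alpha$ would be infinite-dimensional, contradicting the fact that $L(X)_\beta$ is feral. You should replace your (i)$\Rightarrow$(iii) by an argument of this kind.
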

\begin{proof}
(i)$\Rightarrow$(iii) Assume that $L(X)_\beta$ has a bounded resolution $\{ B_\alpha: \alpha\in\NN^\NN\}$. Since $C_p(X)$ is quasibarrelled, its strong dual $L(X)_\beta$ is feral, see p. 392 of \cite{FKS-feral} (recall that following \cite{KST}, a lcs $E$ is called\emph{ feral } if every bounded set of $E$ is finite-dimensional). If $X$ has an uncountable number of points, some set $B_\alpha$ of the resolution would contain infinitely many points of $X$ by  \cite[Proposition 3.7]{kak}. Since $X$ is a Hamel basis for $L(X)$, the set $B_\alpha$ would be infinite-dimensional, a contradiction. Thus $X$ must be countable.

(iii)$\Rightarrow$(ii) If $X$ is countable, $C_p(X)$ is metrizable. Thus  $E$  has a fundamental bounded resolution by Proposition \ref{p:quasibarrelled-strong-dual-res}. Finally, the implication (ii)$\Rightarrow$(i) is trivial.
\end{proof}

The following item  characterizes those $\mu$-spaces $X$ for which  the weak$^{*}$ dual $L_{p}(X)$ of $C_{p}(X)$ has a fundamental bounded resolution.
\begin{proposition} \label{p:FBR-L(X)}
Let $X$ be a $\mu $-space. Then $X$ has a fundamental compact resolution if and only if $L_{p}\left( X\right) $ has a fundamental bounded resolution.
\end{proposition}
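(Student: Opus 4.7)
The plan hinges on a standard description of the bounded sets of $L_p(X)$: for a $\mu$-space $X$, a subset $B\subseteq L_p(X)$ is bounded if and only if $\bigcup_{\xi\in B}\supp(\xi)$ is relatively compact in $X$ and $\sup_{\xi\in B}\|\xi\|_1<\infty$, where $\|\cdot\|_1$ denotes the natural $\ell_1$-coefficient norm on $L(X)$. The easy direction follows from $|f(\xi)|\leq\|\xi\|_1\cdot\sup_K|f|$ whenever $\supp(\xi)\subseteq K$; the forward direction is subtler, and I expect this to be the main obstacle. A direct proof can proceed by a Raikov/Flood-type argument: if $\bigcup_{\xi\in B}\supp(\xi)$ were not functionally bounded in $X$, one can extract a sequence of support points with no cluster point in $X$, note that such a sequence is closed and discrete, and use Tietze-style separation together with local finiteness to manufacture an $f\in C(X)$ witnessing that $B$ cannot be $\sigma(L(X),C(X))$-bounded. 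A cleaner alternative is to identify the strong dual of $L_p(X)$ with $\CC(X)$ by computing polars of the sets $\{\xi\in L(X):\supp(\xi)\subseteq K,\ \|\xi\|_1\leq N\}$; this then allows one to invoke Theorem~\ref{t:G-base-strong} directly.

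For the sufficiency direction, given a fundamental compact resolution $\{K_\alpha:\alpha\in\NN^\NN\}$ of $X$, I would use the encoding $\alpha^*=(\alpha(n+1))_{n\in\NN}$ from Proposition~\ref{p:Stability-FBR} and set
\[
B_\alpha := \alpha(1)\cdot\conv\bigl(K_{\alpha^*}\cup -K_{\alpha^*}\bigr).
\]
Each $B_\alpha$ is $L_p$-bounded by the easy direction of the characterization lemma, the family is $\NN^\NN$-increasing, and it covers $L(X)$ because every element of $L(X)$ has finite support and finite $\ell_1$-norm. That $\{B_\alpha\}$ swallows every bounded $B\subseteq L_p(X)$ is exactly where the forward direction of the lemma is needed: it yields a compact $K\subseteq X$ and $N\in\NN$ with $\supp(\xi)\subseteq K$ and $\|\xi\|_1\leq N$ for all $\xi\in B$, and then the FCR provides $\gamma$ with $K\subseteq K_\gamma$, whence $B\subseteq B_\alpha$ for $\alpha=(N,\gamma(1),\gamma(2),\ldots)$.

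For the necessity direction, suppose $\{B_\alpha:\alpha\in\NN^\NN\}$ is a fundamental bounded resolution of $L_p(X)$ and set $K_\alpha:=\overline{B_\alpha\cap X}^{X}$. Since the canonical map $X\hookrightarrow L_p(X)$ is a topological embedding, $B_\alpha\cap X$ is functionally bounded in $X$, and the $\mu$-space hypothesis makes $K_\alpha$ compact. Monotonicity is immediate, $\{K_\alpha\}$ covers $X$ because each singleton is $L_p$-bounded, and any compact $K\subseteq X$ is $L_p$-bounded, hence contained in some $B_\alpha$, so $K=K\cap X\subseteq B_\alpha\cap X\subseteq K_\alpha$; this exhibits a fundamental compact resolution of $X$. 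This direction bypasses the characterization lemma entirely and is essentially routine once one recalls that $X$ embeds into $L_p(X)$; the whole technical weight of the proof sits on the characterization of $L_p$-bounded sets discussed in the first paragraph.
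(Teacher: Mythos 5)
Your overall architecture coincides with the paper's: both directions rest on the fact that the bounded subsets of $L_p(X)$ are exactly those absorbed by the sets $n\cdot\overline{\mathrm{abx}(\delta(K))}^{L_p(X)}$ with $K\subseteq X$ compact (your support-plus-$\ell_1$-norm description is the same set, computed via bipolars), and your two explicit constructions — $\alpha(1)\cdot\conv\bigl(K_{\alpha^\ast}\cup(-K_{\alpha^\ast})\bigr)$ for sufficiency and $\overline{\delta^{-1}(B_\alpha\cap\delta(X))}^{X}$ for necessity — are precisely the ones the paper uses. The necessity direction is complete and correct as you wrote it.

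The gap is in the one place you yourself flag as carrying the whole technical weight: the forward half of your characterization lemma is never proved, and neither of your two sketches closes it. The direct Raikov/Flood-type argument, as described, addresses only the functional boundedness of the union of supports, and even there incompletely: extracting support points $x_n$ with no cluster point does not by itself produce an $f\in C(X)$ with $f(\xi_n)$ unbounded, because the remaining support points of $\xi_n$ can cancel the contribution of $x_n$ (e.g.\ $\xi_n=\delta(x_n)-\delta(x_n')$ with $x_n'$ near $x_n$); and it says nothing about why the $\ell_1$-norms must be uniformly bounded, which is a separate nontrivial claim. The ``cleaner alternative'' is indeed what the paper does, but as stated it is circular: computing the polars of the sets $\{\xi:\supp(\xi)\subseteq K,\ \|\xi\|_1\leq N\}$ only shows these sets are bounded and hence that $\tau_k\leq\beta\bigl(C(X),L(X)\bigr)$; identifying the strong dual of $L_p(X)$ with $\CC(X)$ requires the \emph{reverse} inequality, which is exactly equivalent to the swallowing statement you are trying to establish. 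The paper supplies the missing ingredient, and this is where the $\mu$-space hypothesis enters the forward direction: by Nachbin--Shirota $\CC(X)$ is barrelled, so $\tau_k=\beta\bigl(C(X),E\bigr)$ for $E$ the full dual of $\CC(X)$, and the sandwich $\tau_k\leq\beta\bigl(C(X),L(X)\bigr)\leq\beta\bigl(C(X),E\bigr)=\tau_k$ forces $\beta\bigl(C(X),L(X)\bigr)=\tau_k$; the bipolar theorem then gives $A\subseteq A^{\circ\circ}\subseteq n\,\delta(K)^{\circ\circ}$ for every bounded $A\subseteq L_p(X)$. Without this (or an honest direct proof of both halves of your lemma), the swallowing step in your sufficiency direction is unsupported.
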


\begin{proof}
Denote by $\delta :X\rightarrow L_{p}\left( X\right) $  the canonical embedding map and let $E$ be the topological dual of $\CC\left( X\right) $. First we observe that if $X$ is a $\mu $-space, then $\CC\left( X\right) $ is the strong dual of $L_{p}\left( X\right) $. Indeed, since $X$ is a $\mu $-space,  the barrelledness of $\CC\left( X\right) $ yields $\tau_{k}=\beta \left( C\left( X\right) ,E\right) $. Since clearly $\tau_k\leq \beta \left( C\left( X\right) ,L\left( X\right) \right) \leq \beta \left( C\left( X\right) ,E\right) $, it follows that $\tau_k=\beta \left( C\left( X\right) ,L\left( X\right) \right) $.

We claim that if $A$ is a bounded subset of $L_{p}\left( X\right) $, there is a compact set $K$ in $X$ and $n\in\NN$ such that $A\subseteq  n\cdot \overline{\mathrm{abx}\left( \delta \left( K\right) \right) }^{L_{p}\left( X\right) }$. Indeed, by the previous observation, if $A$ is a bounded set in $L_{p}\left( X\right) $ there are $n\in \mathbb{N}$ and a compact subset $K$ of $X$ such that
\begin{equation*}
\left\{ f\in C\left( X\right) :\sup\nolimits_{x\in K}\left| f\left( x\right) \right| \leq n^{-1}\right\} \subseteq A^{\circ}.
\end{equation*}%
So, if $u_{f}$ denotes the (unique) continuous linear extension of $f$ to $L_{p}\left( X\right) $, we have
\begin{equation*}
\delta \left( K\right)^{\circ}=\left\{ f\in C\left( X\right) :\sup\nolimits_{x\in K}\left| \left\langle \delta _{x},u_{f}\right\rangle
\right| \leq 1\right\} \subseteq nA^{\circ}.
\end{equation*}%
Hence $A\subseteq A^{\circ\circ}\subseteq n\,\delta \left( K\right)^{\circ\circ}$, where the bipolar is taken in $E$. Thus $A\subseteq n\cdot\overline{\mathrm{abx}\left( \delta \left( K\right) \right) }^{L_{p}\left( X\right) }$.

Assume that $X$ has a fundamental compact resolution $\KK=\{K_{\alpha }:\alpha \in \mathbb{N}^{\mathbb{N}}\}$. For every $\alpha=\big( \alpha(i)\big) \in  \mathbb{N}^{\mathbb{N}}$, set $\alpha ^{\ast }:=\big( \alpha \left( i+1\right)\big) $ and
\[
B_{\alpha }:=\alpha \left( 1\right) \cdot \overline{\mathrm{abx}\left( \delta \left( K_{\alpha ^{\ast }}\right) \right) }^{L_{p}\left( X\right) }.
\]
Clearly the family $\mathcal{B}:= \{B_{\alpha }:\alpha \in \mathbb{N}^{\mathbb{N}}\}$ consists of bounded sets in $L_{p}\left( X\right) $ and satisfies that $B_{\alpha }\subseteq B_{\beta }$ whenever $\alpha \leq \beta $. Moreover, according to the claim and the fact that the family $\KK$ is fundamental, we obtain that $\mathcal{B}$ is a fundamental bounded resolution for $L_{p}\left( X\right) $.

Conversely, let  $\{A_{\alpha }:\alpha \in \mathbb{N}^{\mathbb{N}}\}$ be a fundamental bounded resolution for $L_{p}\left( X\right) $. For each $\alpha \in \NN^\NN$, set $M_{\alpha }:=A_{\alpha }\cap \delta \left( X\right) $.  Then $\{M_{\alpha }:\alpha \in \mathbb{N}^{\mathbb{N}}\}$ is a resolution for $\delta \left( X\right) $ consisting of functionally bounded sets that swallows the functionally bounded subsets of $\delta \left( X\right) $. Since $X$ is a $\mu $-space, then the family $\left\{ \overline{\delta ^{-1}\left( M_{\alpha }\right) }^{X}:\alpha \in \mathbb{N}^{\mathbb{N}}\right\}$ is a fundamental compact resolution for $X$.
\end{proof}

The condition that $X$ is  a $\mu $-space cannot be removed from Proposition \ref{p:FBR-L(X)}, as the following example shows.

\begin{example} {\em
Let $X$ be the ordinal space $\left[ 0,\omega _{1}\right) $, where $\omega_{1}$ is the first uncountable ordinal, and set $Y:=\left[ 0,\omega _{1}\right] $. Since $X$ is pseudocompact, the restriction map $Tf=\left. f\right| _{X}$ maps continuously $C_{p}\left( Y\right) $ onto $C_{p}\left( X\right) $. Consequently, $L_{p}\left( X\right) $ is topologically isomorphic to a linear subspace of $ L_{p}\left( Y\right) $. Since $Y$ is a compact set, according to Proposition  \ref{p:FBR-L(X)} the space $L_{p}\left( Y\right) $ has a (countable) fundamental bounded resolution, which implies that $L_{p}\left(X\right) $ also has a fundamental bounded resolution. However, under $MA+\lnot CH$ the space $X=\left[ 0,\omega _{1}\right) $ even does not have a compact resolution \cite[Theorem 3.6]{Tkachuk}. Observe that $X$ is not a $\mu $-space.}
\end{example}

Next example shows that there is no natural relationship between the existence of fundamental bounded resolutions for  different natural topologies.
\begin{example} {\em
If $X$ is an uncountable Polish space, then the strong dual of $C_p(X)$ does not have a fundamental bounded resolution by Proposition \ref{p:strong-dual-Cp-fund-res}, but the weak${}^\ast$ dual of $C_p(X)$ has a  fundamental bounded resolution by Proposition \ref{p:FBR-L(X)} and Christensen's theorem (see, \cite[Theorem~6.1]{kak}). On the other hand, if $X$ is a countable but non-Polish metrizable space (for example, $X=\mathbb{Q}$), then the strong dual of $C_p(X)$ has a fundamental bounded resolution by Proposition \ref{p:strong-dual-Cp-fund-res}, however the weak${}^\ast$ dual of $C_p(X)$ does not have a  fundamental bounded resolution by Proposition \ref{p:FBR-L(X)} and Christensen's theorem.}
\end{example}

\section{quasi-$(DF)$-spaces} \label{sec:quasi-DF}

In Introduction we formally defined the class of quasi-$(DF)$-spaces. The previous sections apply to gather a few fundamental properties of quasi-$(DF)$-spaces. We refer the readers to corresponding facts dealing with $(DF)$-spaces, see \cite{Jar} and \cite{bonet}. Note however that quasi-$(DF)$-spaces are stable by taking countable products although this property  fails for $(DF)$-spaces.
\begin{theorem} \label{t:properties-quasi-DF-spaces}
The following statements hold true.
\begin{enumerate}
\item [{\rm (i)}] Every  regular $(LM)$-space $E$ is a quasi-$(DF)$-space. In particular, every infinite-dimensional metrizable non-normable lcs $E$ is a quasi-$(DF)$-space not being a $(DF)$-space.
\item [{\rm (ii)}]   The strong dual $E'_\beta$ of  a regular $(LM)$-space is a quasi-$(DF)$-space.
\item [{\rm (iii)}] Countable direct sums of  quasi-$(DF)$-spaces are   quasi-$(DF)$-spaces.
\item [{\rm (iv)}] A subspace of a  quasi-$(DF)$-space is a  quasi-$(DF)$-space.
\item [{\rm (v)}] A countable product $E$ of  quasi-$(DF)$-spaces is a  quasi-$(DF)$-space.
\item [{\rm (vi)}] Every precompact set of a  quasi-$(DF)$-space is metrizable.
\end{enumerate}
\end{theorem}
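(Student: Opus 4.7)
The plan is to assemble each of (i)--(vi) from the results established in the preceding sections together with the known stability properties of the class $\GG$. For (i), every $(LM)$-space carries a $\GG$-base (as recalled in the Introduction) and hence lies in $\GG$, while Proposition \ref{p:dual-regular-LM} supplies the fundamental bounded resolution required by Definition \ref{def:quasi-DF}. The parenthetical assertion then follows by observing that a metrizable lcs is trivially a regular $(LM)$-space (take the constant inductive sequence), so by (i) it is a quasi-$(DF)$-space; on the other hand, a metrizable lcs has a fundamental sequence of bounded sets if and only if it is normable, so an infinite-dimensional non-normable metrizable lcs cannot be a $(DF)$-space. For (ii), Proposition \ref{p:dual-regular-LM} already yields a $\GG$-base on $E'_\beta$, giving $E'_\beta \in \GG$; and since $E$ is quasibarrelled with a $\GG$-base, Proposition \ref{p:quasibarrelled-strong-dual-res}(ii) furnishes a fundamental bounded resolution on $E'_\beta$.

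For (iii), (iv) and (v), Proposition \ref{p:Stability-FBR} already establishes that the existence of a fundamental bounded resolution is preserved under subspaces, countable direct sums, and countable products. Combining this with the fact (due to Cascales--Orihuela, cited in the Introduction) that the class $\GG$ is likewise stable under these three operations, the quasi-$(DF)$ property is immediately inherited in all three cases. Finally, (vi) is a direct consequence of the fact that a quasi-$(DF)$-space lies in $\GG$ together with the classical Cascales--Orihuela theorem asserting that every precompact subset of a lcs in class $\GG$ is metrizable.

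The only step that requires some care is (ii): a direct attempt via Theorem \ref{t:G-base-strong} would only give a $\GG$-base on $E'_\beta$, not the fundamental bounded resolution itself. The correct route is to exploit the quasibarrelledness of the regular $(LM)$-space $E$ together with its $\GG$-base and invoke Proposition \ref{p:quasibarrelled-strong-dual-res}(ii); the remaining items require nothing beyond combining the stability statements already proved with the known behaviour of the class $\GG$.
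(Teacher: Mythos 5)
Your proposal is correct and follows essentially the same route as the paper: class-$\GG$ membership of $(LM)$-spaces plus Proposition \ref{p:dual-regular-LM} for (i), the $\GG$-base on $E'_\beta$ from Proposition \ref{p:dual-regular-LM} combined with quasibarrelledness and Proposition \ref{p:quasibarrelled-strong-dual-res}(ii) for (ii), Proposition \ref{p:Stability-FBR} together with the Cascales--Orihuela stability of the class $\GG$ for (iii)--(v), and the Cascales--Orihuela metrizability theorem for precompact sets in class $\GG$ for (vi). Your closing remark about why Theorem \ref{t:G-base-strong} alone does not suffice for (ii) matches exactly the path the paper takes.
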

\begin{proof}
(i) Every  $(LM)$-space belongs to the class $\GG$, see \cite[Section~11.1]{kak}. Being regular, $E$ has a fundamental bounded resolution by Proposition \ref{p:dual-regular-LM}. Thus $E$ is a quasi-$(DF)$-space. In particular, if $E$ is an infinite-dimensional metrizable non-normable lcs, then $E$ is a  quasi-$(DF)$-space which is not a $(DF)$-space.

(ii) Since $E$ is regular, the space $E'_\beta$ has a $\GG$-base by Proposition \ref{p:dual-regular-LM}. Therefore $E'_\beta$ is in the class $\GG$ (recall that every lcs $E$ with a $\GG$ base $\{ U_\alpha:\alpha\in\NN^\NN\}$ belongs to the class $\GG$ since the family of polars $\{ U_\alpha^\circ:\alpha\in\NN^\NN\}$ is a $\GG$-representation of $E$). As $E$ has a $\GG$-base and is quasibarrelled ($E$ is even bornological, see \cite[Corollary~13.1.5]{Jar}), the space $E'_\beta$ has a fundamental bounded resolution by Proposition \ref{p:quasibarrelled-strong-dual-res}. Therefore $E'_{\beta}$ is a quasi-$(DF)$-space.

(iii)-(v)  follow from  \cite[Proposition 11.1]{kak} and Proposition \ref{p:Stability-FBR}, and (vi) follows from \cite[Theorem 11.1]{kak}.
\end{proof}

Next two  examples show the independence of conditions (i) and (ii) appearing  in Definition \ref{def:quasi-DF} of quasi-$(DF)$-spaces.
\begin{example} \label{exa:def-DF-space-1} {\em
There exist lcs  $E$ not being in class $\GG$ but having a fundamental bounded resolution.  Indeed, if $E$ is an infinite-dimensional Banach space, then $E_{w}$ is not in class $\GG$ (see \cite{kak})  although $E_{w}$ has a fundamental sequence of bounded sets: Assume $E_{w}$ is in class $\GG$. Then, as $E_{w}$ is dense in $\mathbb{R}^{X}$ for some $X$, the Baire space $\mathbb{R}^{X}$  belongs also to the class $\GG$. Now the main theorem of \cite{KM} applies to deduce that $X$ is countable, a contradiction (since then $E_{w}$ would be metrizable implying  the finite-dimensionality of $E$).}
\end{example}

\begin{example} \label{exa:def-DF-space-2}
Let $X$ be a non $\sigma$-compact \v{C}ech-complete Lindel\"{o}f space (for example, $X=\NN^\NN$). Then $\CC(X)$ has a $\GG$-base (hence is in the class $\GG$) and is barrelled but it does not have even a bounded resolution. Consequently, $\CC(X)$ is not a quasi-$(DF)$-space and  the strong dual of $\CC(X)$ does not have a $\GG$-base.
\end{example}

\begin{proof}
Since $X$ has a  fundamental compact resolution by (see Fact 1 in the proof of Proposition 4.7 in \cite{GKKLP}), the space $\CC(X)$ has a $\GG$-base by \cite{feka}. As $X$ is a $\mu$-space, $\CC(X)$ is barrelled.  On the other hand, assume that $\CC(X)$  has a bounded resolution. Then $C_p(X)$ has a bounded resolution too.  Since $X$ is \v{C}ech-complete and Lindel\"{o}f,  there exists (well known fact) a perfect map $T$ from $X$ onto a Polish space $Y$.   As $X$ is not $\sigma$-compact, then $Y$ is also not $\sigma$-compact.  Since $T$ is onto, the adjoint map $T^\ast: C_p(Y)\to C_p(X)$, $T^\ast(f)=f\circ T$, of $T$ is an embedding. Therefore $C_p(Y)$ has a bounded resolution. But this is impossible,  since then  $Y$  would be $\sigma$-compact by Corollary 9.2 of \cite{kak}. The last assertion follows from Theorem \ref{t:G-base-strong}.
\end{proof}

In \cite{Mich} it is proved  that if $E$ is a Banach space whose strong dual is separable, then $E_{w}$ is an $\aleph_{0}$-space.  In \cite[Corollary 5.6]{GKKM} it was shown that a Banach space which does not contain an isomorphic copy of $\ell_{1}$ has separable dual if and only if $E_{w}$ is an $\aleph_{0}$-space. Next theorem extends this result to quasi-$(DF)$-spaces.

\begin{theorem}\label{t:quasi-DF-cosmic}
Let $E$ be a quasi-$(DF)$-space.
\begin{enumerate}
\item [(i)] If the  strong dual $E'_{\beta}$ is separable, then $E_{w}$ is cosmic. 
\item[(ii)] If the strong dual $E'_{\beta}$ is separable and barrelled, then $E_{w}$ is an $\aleph_{0}$-space.
\item[(iii)] If $E$ is a strict $(LF)$-space  such that $E'_{\beta}$ is separable, then $E_{w}$ is an $\aleph_{0}$-space.
\end{enumerate}
\end{theorem}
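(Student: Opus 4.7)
The plan is to handle all three parts uniformly by constructing a compact-covering continuous surjection from an explicit separable metric space onto $E_w$. Let $D = \{f_n : n \in \NN\}$ be a countable dense subset of $E'_\beta$ and let $\mathcal{B} = \{B_\alpha : \alpha \in \NN^\NN\}$ be a fundamental bounded resolution of $E$, which exists because $E$ is quasi-$(DF)$. Define the weakly continuous map $T : E \to \IR^\NN$ by $T(x) := (f_n(x))_{n\in\NN}$. It is injective because $D$ separates points: any $f \in E'$ with $f(x) \ne f(y)$ can be uniformly approximated by some $f_n$ on the (bounded) two-point set $\{x,y\}$, forcing $f_n(x) \ne f_n(y)$.

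First I would show that for each $\alpha$ the restriction $T|_{B_\alpha}$ is a homeomorphism onto $T(B_\alpha) \subseteq \IR^\NN$, equivalently $\sigma(E,E')|_{B_\alpha} = \sigma(E,D)|_{B_\alpha}$. This is a three-$\varepsilon$ argument using that $\beta(E',E)$ is the topology of uniform convergence on bounded sets and $D$ is dense therein, so any $f \in E'$ admits $f_n \in D$ with $\sup_{B_\alpha}|f - f_n| < \varepsilon$, and a basic weak neighborhood defined by $f$ is sandwiched inside one defined by $f_n$ on $B_\alpha$. Consequently each $(B_\alpha, w)$ embeds as a second countable subspace of $\IR^\NN$.

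The key step is to assemble these local embeddings into one separable metric domain. Let
\[
M := \{ (\alpha, y) \in \NN^\NN \times \IR^\NN : y \in T(B_\alpha) \}
\]
carry the subspace topology from the Polish space $\NN^\NN \times \IR^\NN$, and define $\phi : M \to E_w$ by $\phi(\alpha, y) := T^{-1}(y) \in B_\alpha$. The map $\phi$ is surjective because $\mathcal{B}$ covers $E$. For sequential continuity --- which equals continuity since $M$ is metrizable --- the crucial observation is that $\NN^\NN$ carries the product of discrete topologies, so a convergent sequence $\alpha_k \to \alpha_0$ is pointwise eventually stationary and the pointwise supremum $\beta := \sup_k \alpha_k$ lies in $\NN^\NN$ and dominates every $\alpha_k$. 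By the resolution property $T^{-1}(y_k)$ and $T^{-1}(y_0)$ all lie in $B_\beta$, and the first step applied to $B_\beta$ yields $T^{-1}(y_k) \to T^{-1}(y_0)$ weakly. This already gives (i): $E_w$ is a continuous image of the separable metric space $M$, hence cosmic.

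For (ii) and (iii) I would further check that $\phi$ is compact-covering, which upgrades cosmic to $\aleph_0$-space by Michael's characterization. If $K \subseteq E_w$ is compact, then $K$ is weakly bounded, hence bounded by Mackey's theorem, so $K \subseteq B_\alpha$ for some $\alpha$; then $C := \{\alpha\} \times T(K)$ is compact in $M$ and $\phi(C) = T^{-1}(T(K)) = K$. The main obstacle is the assembly step --- the insight that one can absorb the $\NN^\NN$-indexing of the resolution into the first factor of a Polish product, reducing global weak continuity to local homeomorphism on each $B_\beta$; the rest is bookkeeping. Notably this single argument already yields the stronger $\aleph_0$ conclusion in (i) as well, so the extra hypotheses in (ii) and (iii) are not strictly required for this approach, though they likely feature in the authors' alternative route via Proposition \ref{p:quasibarrelled-strong-dual-res} or the decomposition $E'_\beta = \prod_n (E_n)'_\beta$ in the strict $(LF)$-case.
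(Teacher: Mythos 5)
Your proof is correct, and it takes a genuinely different route from the paper's. The authors argue through the weak$^*$ bidual: by Theorem \ref{t:G-base-strong} the strong dual $E'_{\beta}$ has a $\GG$-base $\{U_{\alpha}\}$, the polars $U_{\alpha}^{\circ}$ form a compact resolution on $E''_w=(E'',\sigma(E'',E'))$, and separability of $E'_{\beta}$ makes $E''_w$ submetrizable; then (i) follows from Cascales--Orihuela's theorem (compact resolution plus a weaker metrizable topology gives analyticity, hence cosmicness, inherited by the subspace $E_w$), (ii) uses barrelledness of $E'_{\beta}$ to see that compact subsets of $E''_w$ are equicontinuous and hence swallowed by the $U_{\alpha}^{\circ}$, so that $E''_w$ has a fundamental compact resolution and is an $\aleph_0$-space by a theorem of Cascales--Orihuela--Tkachuk, and (iii) is reduced to (ii) by proving that $E'_{\beta}$ is barrelled for a strict $(LF)$-space with separable strong dual. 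You instead construct an explicit compact-covering surjection from a subspace of $\NN^{\NN}\times\IR^{\NN}$ onto $E_w$ itself, which, given the paper's own definition of cosmic and $\aleph_0$-spaces as (compact-covering) images of separable metric spaces, settles all three parts at once; the two key points --- that $\sigma(E,E')$ and $\sigma(E,D)$ agree on each bounded set when $D$ is $\beta(E',E)$-dense, and that a convergent sequence in $\NN^{\NN}$ is dominated by a single index --- are both sound, and the compact-covering step only needs Mackey's theorem plus the swallowing property of the resolution. Your approach is the natural extension of Michael's classical argument for Banach spaces with separable dual; it avoids the bidual and membership in the class $\GG$ altogether, and it actually yields a sharper statement: under the hypotheses of (i) alone one already gets that $E_w$ is an $\aleph_0$-space, so the barrelledness assumption in (ii) is not needed for the conclusion, and (iii) follows immediately from the regularity of strict $(LF)$-spaces. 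What the paper's route buys in exchange is structural information about $E''_w$ (analyticity, a fundamental compact resolution) that connects the theorem to the machinery of Section \ref{sec:general}; one small point of hygiene in your write-up is that the supremum $\beta$ should be taken over all $\alpha_k$ together with $\alpha_0$ (or one should note $\alpha_0\leq\beta$ by passing to the limit coordinatewise), but this is cosmetic.
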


\begin{proof}
(i) By Theorem \ref{t:G-base-strong}, the space $E'_{\beta}$ has a $\GG$-base $\{U_{\alpha}:\alpha\in\mathbb{N}^{\mathbb{N}}\}$. Hence its polar $\{ U^\circ_{\alpha}:\alpha\in\mathbb{N}^{\mathbb{N}}\}$ forms a compact resolution in $E''_w :=(E'',\sigma(E'',E'))$. Since $E'_{\beta}$ is separable, $\sigma(E'',E')$ admits a weaker metrizable topology, and hence the space $E''_w$ is analytic by  \cite[Theorem 15]{CasOr} (i.e. $E''_w$ is a continuous image on $\NN^\NN$). Therefore $E''_w$ is a cosmic space. As $E_w$ is a subspace of $E''_w$, the space $E_{w}$ is cosmic as well.

 (ii) As in (i), the space $E''_w$  has a compact resolution $\{ U^\circ_{\alpha}:\alpha\in\mathbb{N}^{\mathbb{N}}\}$. We show that $E''_w$ has a fundamental compact resolution. Indeed, Theorem \ref{t:G-base-strong} and the fact that $E''_w$ is locally complete (since $E'_{\beta}$ is barrelled) imply that $E''_w$ has a fundamental bounded resolution. Now, by the barrelledness of $E'_{\beta}$, the space $E'_{\beta}$ has a $\GG$-base $\U=\left\{ U_{\alpha }:\alpha \in \mathbb{N}^{\mathbb{N}}\right\} $. Therefore the family $\U^\circ :=\left\{ U_{\alpha }^{\circ}:\alpha \in \mathbb{N}^{\mathbb{N}}\right\} $ is a resolution for $E''_w$ consisting of compact subsets. To check that $\U^\circ$ swallows the compact sets,  let $K$ be a compact subset of $E''_w$. As $E'_{\beta}$ is barrelled, $K$ is equicontinuous. So there is an $\alpha\in\NN^\NN$ such that $K\subseteq U_\alpha^\circ$, and hence $\U^\circ$ swallows the compact sets. On the other hand, $E''_w$ is submetrizable since $E'_{\beta}$ is separable. Now Theorem 3.6 of \cite{COT-1} yields that $E''_w $ is an $\aleph_{0}$-space, so $E_{w}$ is an $\aleph_{0}$-space, too.


(iii) Any  strict $(LF)$-space $E$, being regular, is a quasi-$(DF)$-space by (i) of Theorem \ref{t:properties-quasi-DF-spaces}. Note also that any $(LM)$-space is quasibarrelled (even bornological).  Thus to apply (ii) it is sufficient to show that $E'_{\beta}$ is barrelled.
Let $\{ E_{n}\}_{n\in\NN}$ be a defining sequence of Fr\'echet lcs for $E$.   For each $n\in\NN$, the strong dual $(E_{n})'_{\beta}$  of $E_{n}$  is a $(DF)$-space. Since $E$ is a strict limit, the dual $E'_{\beta}$  is linearly homeomorphic with the projective limit of the sequence $\{ (E_{n})'_{\beta})\}_{n\in\NN}$  of complete $(DF)$-spaces, see \cite[Preliminaries]{bonet2}. Therefore $E'_{\beta}$ is also complete. On the other hand, $E'_{\beta}$ is  continuously mapped onto each $(E'_{n})_{\beta}$, so any $(E'_{n})_{\beta}$ is separable. By \cite[Proposition 8.3.45]{bonet}, any $E_{n}$ is distinguished. Hence applying again \cite[Preliminaries~(c)]{bonet2}, the space $E'_{\beta}$ is quasibarrelled. Since any complete quasibarrelled space is barrelled (see \cite[Proposition~11.2.4]{Jar}), the space $E'_{\beta}$ is barrelled.
\end{proof}

Below we provide more concrete examples which clarify the fundamental differences between $(DF)$-spaces and quasi-$(DF)$-spaces.
\begin{example}
The space of distributions $D'(\Omega)$ over an  open non-empty subset $\Omega$ of $\mathbb{R}^{n}$ has the following properties:
\begin{enumerate}
\item [(i)]  $D'(\Omega)$ is a quasi-$(DF)$-space.
\item [(ii)]  $D'(\Omega)$  is a weakly $\aleph_{0}$-space.
\item [(iii)]  $D'(\Omega)$ is not a $(DF)$-space.
\item [ (iv)]  $D'(\Omega)$  is not a weakly Ascoli space.
\end{enumerate}
\end{example}

\begin{proof}
Recall that the space $D'(\Omega)$ is the strong dual of the space  $D(\Omega)$ of test functions which is a complete Montel (hence barrelled)   strict $(LF)$-space of a sequence of Montel--Fr\'echet lcs. 
Therefore $D'(\Omega)$ is a quasi-$(DF)$-space by (ii) of Theorem      \ref{t:properties-quasi-DF-spaces}. 
Since $D'(\Omega)$ is a Montel quasi-$(DF)$-space whose strong dual $D(\Omega)$  is separable and barrelled, $D'(\Omega)$  is a weakly $\aleph_{0}$-space by (ii) of Theorem \ref{t:quasi-DF-cosmic}. As $D'(\Omega)$ does not have a fundamental bounded sequence (otherwise  $D(\Omega)$ would be metrizable),  $D'(\Omega)$ is not a $(DF)$-space. Finally, Theorem 1.6 of \cite{Gabr-LCS-Ascoli} states that if $E$ is a barrelled weakly Ascoli space, then every weak${}^\ast$-bounded subset of $E'$ is finite-dimensional.  Thus $D'(\Omega)$  is not  weakly Ascoli since $D(\Omega)$  has an infinite-dimensional compact sets.
\end{proof}

\begin{example} \label{exa:DF-no-G-base}  {\em
(Under $\aleph_1 <\mathfrak{b}$) The space $\CC\big( \w_1 \big)$ is a $(DF)$-space by Theorem 12.6.4 of \cite{Jar}  which is not barrelled (recall that the ordinal space $w_1 =[0,\w_1)$ is pseudocompact). However,  $\CC\big( \w_1 \big)$ does not have a $\GG$-base by Proposition 16.13 of \cite{kak}.} 
\end{example}

As we mentioned above, it is known that the space $C_{p}(X)$ is a $(DF)$-space if and only if $X$ is finite. Indeed, although always $C_{p}(X)$ is quasibarrelled, see
\cite[Corollary 11.7.3]{Jar}, the space $C_{p}(X)$ admits a fundamental sequence of bounded sets only  if $X$ is finite, see \cite{kak}. For quasi-$(DF)$-spaces $C_{p}(X)$ the corresponding situation  looks even more striking as the following theorem shows.
\begin{theorem} \label{t:Cp-quasi-DF}
For $C_{p}(X)$ the following assertions are equivalent:
\begin{enumerate}
\item [(i)] $C_{p}(X)$ is a quasi-$(DF)$-space.
\item[(ii)] $C_{p}(X)$ has a fundamental bounded resolution.
\item [(iii)] $X$ is countable.
\item[(iv)] $C_{p}(X)$ is in the class $\GG$.
\item [(v)] $C_{p}(X)$ has  a $\GG$-base.
\end{enumerate}
\end{theorem}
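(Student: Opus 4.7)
The strategy is to anchor the whole equivalence on Theorem \ref{t:FBR-Cp}, which already gives (ii) $\Leftrightarrow$ (iii), and then weave (i), (iv), and (v) into the cycle. Two easy observations get a lot for free: by the very definition of a quasi-$(DF)$-space, (i) $\Rightarrow$ (ii), and conversely (ii) together with (iv) gives (i). So the real work is to build a loop through (iv) and (v) back to (iii).

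For (iii) $\Rightarrow$ (v): if $X$ is countable, then $C_p(X)$ is metrizable, hence carries a countable decreasing base at zero $\{V_n\}_{n\in\NN}$; setting $U_\alpha := V_{\alpha(1)}$ for $\alpha\in\NN^\NN$ yields a $\GG$-base. The implication (v) $\Rightarrow$ (iv) is immediate: the polars of a $\GG$-base form a $\GG$-representation of the weak* dual, as noted in the proof of Theorem \ref{t:properties-quasi-DF-spaces}(ii).

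The heart of the argument is (iv) $\Rightarrow$ (iii), and this is where I would concentrate. The crucial input is that $C_p(X)$ is quasibarrelled for every Tychonoff $X$ (cited in the paragraph preceding the theorem via \cite[Corollary 11.7.3]{Jar}). Then \cite[Lemma 2]{CKS} promotes the class $\GG$ hypothesis to a $\GG$-base on $C_p(X)$. Quasibarrelledness plus a $\GG$-base is precisely the setup of Proposition \ref{p:quasibarrelled-strong-dual-res}(ii), so the strong dual $L(X)_\beta$ of $C_p(X)$ acquires a fundamental bounded resolution; Proposition \ref{p:strong-dual-Cp-fund-res} then forces $X$ to be countable. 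Finally (iii) $\Rightarrow$ (i) is obtained by combining (iii) $\Rightarrow$ (ii) (from Corollary \ref{c:metrizable-fbr}, since $C_p(X)$ becomes metrizable) with (iii) $\Rightarrow$ (v) $\Rightarrow$ (iv), closing the loop.

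The main obstacle is precisely (iv) $\Rightarrow$ (iii): it cannot be proved directly on $C_p(X)$, but requires chaining four ingredients—quasibarrelledness of $C_p(X)$, the Cascales--K\c akol--Saxon upgrade from class $\GG$ to a $\GG$-base, Proposition \ref{p:quasibarrelled-strong-dual-res}(ii) to transfer information to the strong dual $L(X)_\beta$, and finally Proposition \ref{p:strong-dual-Cp-fund-res} to conclude countability of $X$. Without quasibarrelledness of $C_p(X)$ the passage from class $\GG$ to a $\GG$-base (and then to the strong dual) would collapse, so it is essential that this property holds for every $X$ with no extra hypothesis.
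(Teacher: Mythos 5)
Your proof is correct, and its skeleton matches the paper's: both anchor the equivalence (ii) $\Leftrightarrow$ (iii) on Theorem \ref{t:FBR-Cp}, both use the quasibarrelledness of $C_p(X)$ together with \cite[Lemma 2]{CKS} to pass between class $\GG$ and the existence of a $\GG$-base, and both get (i) from (ii) and (iv) by unwinding Definition \ref{def:quasi-DF}. The one place where you genuinely diverge is the closing implication from ``$C_p(X)$ has a $\GG$-base'' back to ``$X$ is countable.'' The paper disposes of this in one line by citing the result of Cascales--K\c akol--Saxon that a space $C_p(X)$ with a $\GG$-base is metrizable. You instead stay inside the paper's own machinery: quasibarrelledness plus a $\GG$-base feeds into Proposition \ref{p:quasibarrelled-strong-dual-res}(ii) to give $L(X)_\beta$ a fundamental bounded resolution, and Proposition \ref{p:strong-dual-Cp-fund-res} (via ferality of the strong dual) then forces $X$ to be countable. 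This is a legitimate and self-contained alternative --- it trades one external citation for two internal propositions and makes the theorem less dependent on the metrizability result of \cite{CKS} --- at the cost of being slightly longer than the paper's direct appeal. There is no circularity in your chain, since neither of the two propositions you invoke relies on the theorem being proved.
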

\begin{proof}
(ii) follows from (i) by the definition. (iii) follows from (ii) by Theorem \ref{t:FBR-Cp}. The implication (iii) $\Rightarrow$ (iv) is trivial. Since $C_{p}(X)$ is always quasibarrelled (see again \cite[Corollary 11.7.3]{Jar}), the implication (iv) $\Rightarrow$ (v) follows from \cite{CKS}. Finally, if $C_{p}(X)$ has a $\GG$-base, it is metrizable again by \cite{CKS}.
\end{proof}

Theorem \ref{t:Cp-quasi-DF} suggests the following question: {\em For which Tychonoff space $X$, the space $\CC(X)$ is a quasi-$(DF)$-space}?
Below we obtain a complete answer to this question for metrizable spaces $X$.
\begin{proposition} \label{p:Ck-quasi-DF-metr}
Let $X$ be a metrizable space. Then $\CC(X)$ is a quasi-$(DF)$-space if and only if $X$ is a Polish $\sigma$-compact space. In particular, if $X$ is  a Polish $\sigma$-compact but non-compact space, then $\CC(X)$ is a  quasi-$(DF)$-space which is not a $(DF)$-space.
\end{proposition}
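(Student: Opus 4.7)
The plan is to split the equivalence along the two conditions defining a quasi-$(DF)$-space and apply the tools established in the previous sections. For metrizable $X$, Corollary \ref{c:Ck-strong-dual-G} identifies the existence of a fundamental bounded resolution in $\CC(X)$ with $\sigma$-compactness of $X$, so the proposition reduces, under the assumption that $X$ is metrizable and $\sigma$-compact, to the equivalence ``$\CC(X)\in\GG$ iff $X$ is Polish.''

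For sufficiency, assume $X$ is Polish and $\sigma$-compact. Corollary \ref{c:Ck-strong-dual-G} gives a fundamental bounded resolution in $\CC(X)$. Being Polish and metrizable, $X$ admits a fundamental compact resolution by Christensen's theorem \cite[Theorem~6.1]{kak}, and Theorem~2 of \cite{feka} promotes this to a $\GG$-base of $\CC(X)$, placing $\CC(X)$ in the class $\GG$. For necessity, Corollary \ref{c:Ck-strong-dual-G} again yields $\sigma$-compactness of $X$. To pass from $\CC(X)\in\GG$ to a $\GG$-base I plan to apply Lemma~2 of \cite{CKS}, whose quasibarrelledness hypothesis holds for every metrizable $X$: such $X$ is a $\mu$-space, because closed functionally bounded subsets of a metric space are compact (use Tietze to extend an unbounded continuous function off a closed non-compact, hence non-pseudocompact, subset), and the Nachbin--Shirota theorem then makes $\CC(X)$ barrelled. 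Theorem~2 of \cite{feka} converts the resulting $\GG$-base into a fundamental compact resolution on $X$, and Christensen's theorem turns this into Polishness.

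For the ``in particular'' clause, the equivalence just established makes $\CC(X)$ a quasi-$(DF)$-space whenever $X$ is Polish $\sigma$-compact non-compact. That $\CC(X)$ is not a $(DF)$-space follows from the observation that $\CC(X)$ is non-normable for any metrizable non-compact $X$, so it cannot be a $(DF)$-space with a bounded neighbourhood of zero; this is most transparent in the hemicompact case $X=\IR$, where $\CC(X)$ is Fréchet non-Banach and a Fréchet $(DF)$-space is necessarily Banach. The main obstacle in the overall proof is verifying quasibarrelledness of $\CC(X)$ via the $\mu$-space property of metric spaces; once that is in hand, every remaining step is a direct citation of the results assembled in the preceding sections.
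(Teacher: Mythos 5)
Your proof of the main equivalence is correct and follows essentially the same route as the paper: Corollary \ref{c:Ck-strong-dual-G} handles the fundamental bounded resolution versus $\sigma$-compactness of $X$, and the passage between membership in the class $\GG$, the existence of a $\GG$-base, a fundamental compact resolution on $X$ (via \cite{feka}) and Polishness (via Christensen's theorem) is the same. The paper simply quotes barrelledness of $\CC(X)$ (Nachbin--Shirota, $X$ metrizable hence a $\mu$-space) and Lemma~15.2 of \cite{kak} where you spell out the $\mu$-space argument and use the quasibarrelled version from \cite{CKS}; this is an immaterial difference.

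The gap is in the ``in particular'' clause. Non-normability of $\CC(X)$ does not rule out its being a $(DF)$-space: a $(DF)$-space need not have a bounded neighbourhood of zero, only a fundamental \emph{sequence} of bounded sets, and the standard examples (strong duals of non-normable Fr\'echet spaces, countable inductive limits of Banach spaces) are non-normable $(DF)$-spaces. Your fallback observation that a Fr\'echet $(DF)$-space is Banach is valid, but it only applies when $\CC(X)$ is metrizable, i.e.\ when $X$ is hemicompact; a Polish $\sigma$-compact space need not be hemicompact (a complete metric fan of countably many arcs is Polish and $\sigma$-compact but not locally compact, so $\CC(X)$ is not metrizable for such $X$). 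The argument you need is the characterization the paper recalls right after the proposition: $\CC(X)$ is a $(DF)$-space if and only if every countable union of compact subsets of $X$ is relatively compact \cite[Theorem~10.1.22]{bonet}. A non-compact metrizable $X$ contains an infinite closed discrete subset, which is a countable union of singletons that is not relatively compact, so $\CC(X)$ is not a $(DF)$-space.
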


\begin{proof}
Assume that  $\CC(X)$ is a quasi-$(DF)$-space. Since $\CC(X)$ has a fundamental bounded resolution, Corollary \ref{c:Ck-strong-dual-G} implies that $X$ is a $\sigma$-compact space. On the other hand, as $\CC(X)$ is barrelled, $\CC(X)$ belongs to the class $\GG$ if and only if it has a $\GG$-base, see Lemma 15.2 of \cite{kak}. Therefore $X$ has a fundamental compact resolution by \cite{feka}. Thus $X$ is Polish by the Christensen theorem \cite[Theorem~6.1]{kak}.

Conversely, if $X$ is a Polish $\sigma$-compact space, then $\CC(X)$ has a $\GG$-base by \cite{feka} and has a fundamental bounded resolution by Corollary \ref{c:Ck-strong-dual-G}. Thus $\CC(X)$ is a quasi-$(DF)$-space.

If the Polish $\sigma$-compact $X$ is not compact, it has a countable non relatively compact subset. Thus $\CC(X)$ is a $(DF)$-space by Theorem 10.1.22 of \cite{bonet}.
\end{proof}
Recall that $C_{k}(X)$ is a $(DF)$-space if and only if any countable union of compact subsets of $X$ is a relatively compact set, see \cite[Theorem 10.1.22]{bonet}.
It is  known that a $(DF)$-space $E$ is quasibarrelled if and only if $E$ has countable tightness, see \cite[Proposition 16.4 and Theorem 12.3]{kak}. Therefore any vector subspace of a quasibarrelled $(DF)$-space has the same property. This may suggest also the following
\begin{question}\label{df}
Does there exist a  quasi-$(DF)$-space with countable tightness not being quasibarrelled?
\end{question}

\bibliographystyle{amsplain}

\end{document}